\newenvironment{enumalph}
{\begin{enumerate}}
{\end{enumerate}}
\newtheorem{theorem}{Theorem}
\newtheorem{definition}[theorem]{Definition}
\newtheorem{lemma}[theorem]{Lemma} 
\newtheorem{proposition}[theorem]{Proposition}
\def\H{\mathbb{H}}
\def\T{\mathcal{T}}
\def\X{\mathcal{X}}
\DeclareMathOperator{\pslc}{PSL(2, \mathbb{C})}
\DeclareMathOperator{\pslr}{PSL(2, \mathbb{R})}
\DeclareMathOperator{\Hom}{Hom}
\DeclareMathOperator{\out}{Out}
\DeclareMathOperator{\inj}{inj}
\DeclareMathOperator{\thin}{thin}
\DeclareMathOperator{\thick}{thick}
\DeclareMathOperator{\Mod}{Mod}
\DeclareMathOperator{\inte}{int}
\DeclareMathOperator{\homeo}{Homeo}
\DeclareMathOperator{\id}{id}
\title{Dynamics on the $\pslc$-character variety of a compression body \thanks{Partially supported by NSF RTG grant DMS 0602191}}
\author{Michelle Lee}
\date{}
\begin{document}
\maketitle

\begin{abstract}
   \centering
   \begin{minipage}{0.65\textwidth}
Let $M$ be a nontrivial compression body without toroidal boundary components.  Let $\mathcal{X}(M)$ be the $\pslc$-character variety of $\pi_1(M)$.  We examine the dynamics of the action of $\out(\pi_1(M))$ on $\mathcal{X}(M),$ and in particular, we find an open set on which the action is properly discontinuous that is strictly larger than the interior of the deformation space of marked hyperbolic $3$-manifolds homotopy equivalent to $M$.  \end{minipage}
\end{abstract}

In this paper we use the deformation theory of hyperbolic $3$-manifolds to study the dynamics of $\out(\pi_1(M))$ on the $\pslc$-character variety of $\pi_1(M)$ when $M$ is a nontrivial compression body without toroidal boundary components.  In particular, we find a domain of discontinuity for the action that is strictly larger than the previously known domain of discontinuity.

The study of $\out(\pi_1(M))$ acting on character varieties or representation varieties is a blooming field of study.  One motivation comes from the classical result that the mapping class group of a closed oriented surface $S$ of genus at least two acts properly discontinuously on $\T(S)$ the Teichm\"uller space of $S$.  Teichm\"uller space $\T(S)$ is a component of the representation variety $\Hom(\pi_1(S), \pslr)/\pslr$ and together with $\T(\bar S)$ the Teichm\"uller space of $S$ with the opposite orientation, form the set of discrete and faithful representations.  The group $\out(\pi_1(S))$ acts properly discontinuously on $\T(S) \sqcup \T(\bar S)$ and Goldman conjectured that the action on the remaining components is ergodic.  The so-called higher Teichm\"uller spaces, which are analogies of Teichm\"uller space for higher rank Lie groups, also form domains of discontinuity (see, for example, \cite{lab2}, \cite{wie}, \cite{har-str}).

A compression body is the boundary connect sum of a $3$-ball, a collection of $I$-bundles over closed surfaces and a handlebody where the other components are connected to the $3$-ball along disjoint discs.  The $\pslc$-character variety of $\pi_1(M)$ is
$$\mathcal{X}(M) = \Hom(\pi_1(M), \pslc) /\!\!/ \pslc,$$
the quotient of $\Hom(\pi_1(M), \pslc)$ from geometric invariant theory.

The group $\out(\pi_1(M))$ acts on $\mathcal{X}(M)$ in the following way: an outer automorphism $[f]$ maps a representation $[\rho]$ to $[\rho$ $\circ f^{-1}]$.
Sitting inside $\X(M)$ is $AH(M)$ the space of conjugacy classes of discrete and faithful representations of $\pi_1(M)$ into $\pslc$.  It can also be thought of as the space of marked hyperbolic $3$-manifolds homotopy equivalent to $M.$  Using the parametrization of the interior of $AH(M)$ (see \cite{can-mcc} Chapter 7 for more details on this parametrization), it is well known that this action is properly discontinuous on the interior of $AH(M).$
In this paper we find a domain of discontinuity containing the interior of $AH(M)$ as well as some but not all points on $\partial AH(M)$ when $M$ is a nontrivial hyperbolizable compression body without toroidal boundary components.  Namely, we prove the following.

\begin{theorem} \label{compbody}
If $M$ is a nontrivial hyperbolizable compression body without toroidal boundary components, then there exists an open, $\out(\pi_1(M))$-invariant subset $\mathcal{SS}(M)$, called the set of separable-stable representations, in $\mathcal{X}(M)$ containing the interior of $AH(M)$ as well as points on $\partial AH(M)$ such that the action of $\out(\pi_1(M))$ is properly discontinuous on $\mathcal{SS}(M)$.
\end{theorem}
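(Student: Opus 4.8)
The plan is to build, for compression bodies, the analogue of Minsky's theory of primitive‑stable representations of free groups, with \emph{separable} elements of $\pi_1(M)$ in the role of primitive elements. Write $\pi_1(M)=\pi_1(S_1)*\cdots*\pi_1(S_k)*F_\ell$, where the $S_i$ are the interior boundary components (all of genus $\ge 2$, since $M$ has no toroidal boundary) and $F_\ell$ is free. Call $g\in\pi_1(M)$ \emph{separable} if there is a free‑product decomposition $\pi_1(M)=\langle g\rangle * H$ in which every $\pi_1(S_i)$ is conjugate into $H$; that is, $g$ is primitive in a way ``transverse'' to the incompressible part of $\partial M$. Fix a finite generating set, let $|\cdot|$ be word length and $\|\cdot\|$ cyclic (conjugacy) length, and for $[\rho]\in\X(M)$ let $\tau_\rho\colon\pi_1(M)\to\H^3$ be an orbit map. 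Declare $[\rho]\in\mathcal{SS}(M)$ if there are constants $K\ge 1$, $\delta\ge 0$ so that for every separable $g$ the restriction of $\tau_\rho$ to the axis of $g$ in the Cayley graph is a $(K,\delta)$‑quasigeodesic of $\H^3$. The theorem then splits into three parts: (A) $\mathcal{SS}(M)$ is open and $\out(\pi_1(M))$‑invariant; (B) $\mathcal{SS}(M)$ contains $\inte(AH(M))$ and meets $\partial AH(M)$; (C) the action of $\out(\pi_1(M))$ on $\mathcal{SS}(M)$ is properly discontinuous. A lemma used throughout is that $[\rho]\in\mathcal{SS}(M)$ forces $\rho|_{\pi_1(S_i)}$ to be quasi‑Fuchsian for every $i$.

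For (A), invariance is formal: an automorphism of $\pi_1(M)$ permutes the conjugacy classes of the freely indecomposable non‑cyclic free factors $\pi_1(S_i)$ (uniqueness of the Grushko decomposition), hence carries separable elements to separable elements, and $\tau_{\rho\circ f^{-1}}$ along the axis of $g$ agrees, up to the fixed ambient isometry, with $\tau_\rho$ along the axis of $f^{-1}(g)$. Openness is the quantitative part: if $[\rho_n]\to[\rho]$ with $[\rho_n]\notin\mathcal{SS}(M)$, pick separable $g_n$ along whose axis $\tau_{\rho_n}$ is not $(n,n)$‑quasigeodesic; rescaling and extracting a limiting lamination carried by the separable axes — here one must check compactness of the relevant space of laminations, namely those respecting the peripheral structure coming from the $\pi_1(S_i)$ — produces a separable‑type leaf along which $\tau_\rho$ fails to be a uniform quasigeodesic, contradicting $[\rho]\in\mathcal{SS}(M)$. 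This follows the template of Minsky's openness argument, the new point being the compactness of this constrained lamination space.

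For (B), by \cite{can-mcc} the interior of $AH(M)$ consists of geometrically finite, minimally parabolic representations, and as $M$ has no toroidal boundary these have no parabolics, i.e. are convex cocompact; by the Milnor--\v{S}varc lemma applied to the action on the convex core, $\tau_\rho$ is then a global quasi‑isometric embedding of $\pi_1(M)$ into $\H^3$, a fortiori uniformly quasigeodesic along every separable axis, so $\inte(AH(M))\subseteq\mathcal{SS}(M)$. For boundary points I would prove the stronger statement that any geometrically finite $[\rho]\in AH(M)$ all of whose (rank‑one) parabolic subgroups are generated by non‑separable elements lies in $\mathcal{SS}(M)$: such $\rho$ is relatively hyperbolic with respect to its cusps, $\langle g\rangle$ is uniformly relatively quasiconvex for separable $g$ (cyclic and non‑peripheral), so the geodesic representatives of separable elements stay uniformly outside the cusps, yielding the uniform quasigeodesic bound. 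To obtain the desired points of $\partial AH(M)\cap\mathcal{SS}(M)$, pinch a simple closed curve $c$ carried by an interior boundary component $S_i$: a nontrivial element of $\pi_1(S_i)$ cannot lie in a $\langle\,\cdot\,\rangle$‑factor complementary to a conjugate of $\pi_1(S_i)$, so $c$ is non‑separable; the pinched holonomy is discrete, faithful and geometrically finite but not minimally parabolic, hence lies on $\partial AH(M)$, and it lies in $\mathcal{SS}(M)$ by the stronger statement.

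For (C), first the lemma. If $g$ is separable with $\pi_1(M)=\langle g\rangle * H$ and $\pi_1(S_i)\le H$, then for every $h\in\pi_1(S_i)$ the transvection $g\mapsto gh$ fixing $H$ is an automorphism, so $\pi_1(M)=\langle gh\rangle * H$ and $gh$ is separable, with $\|gh\|\asymp 1+|h|$; separable‑stability gives $\length_\rho(gh)\ge A\|gh\|-B\gtrsim|h|$ for uniform $A>0$, $B$, and comparing with $\length_\rho(gh)\le d(o,\rho(g)o)+d(o,\rho(h)o)$ shows the orbit map $\pi_1(S_i)\to\H^3$ is a quasi‑isometric embedding, hence $\rho|_{\pi_1(S_i)}$ is discrete, faithful, convex cocompact, i.e. quasi‑Fuchsian. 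Now suppose the action is not properly discontinuous: there are a compact $C\subset\mathcal{SS}(M)$ and distinct $\phi_n\in\out(\pi_1(M))$ with $[\rho_n]\in C$ and $\phi_n\cdot[\rho_n]=[\rho_n\circ\phi_n^{-1}]\in C$; after passing to a subsequence $[\rho_n]\to[\rho]$, $[\rho_n\circ\phi_n^{-1}]\to[\sigma]$ in $\mathcal{SS}(M)$. Openness and compactness of $C$ give uniform constants, hence uniform $A>0$, $B$ with $\length_{\rho_n}(w)\ge A\|w\|-B$ for all separable $w$ and large $n$; since $\length_{\rho_n}(\phi_n^{-1}(g))=\length_{\rho_n\circ\phi_n^{-1}}(g)\to\length_\sigma(g)<\infty$ and $\phi_n^{-1}(g)$ is separable, $\|\phi_n^{-1}(g)\|$ is bounded in $n$ for every separable $g$. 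It remains to conclude that $\{\phi_n\}$ is finite: using the Fouxe--Rabinovitch description of $\out$ of a free product, one reduces to the case that the images of the $\phi_n$ in $\prod_i\Mod^\pm(S_i)$ are eventually distinct — impossible, since by the lemma $\mathcal{SS}(M)$ maps equivariantly into $\prod_i QF(S_i)$ on which $\prod_i\Mod^\pm(S_i)$ acts properly discontinuously — or that the $\phi_n$ differ by free‑product‑type automorphisms ($\out(F_\ell)$‑automorphisms and partial conjugations), which are pinned down by the bound on $\|\phi_n^{-1}(g)\|$ (the $\out(F_\ell)$‑part exactly as in Minsky's free‑group argument, while partial conjugations by elements of nontrivial $\rho$‑image push $[\rho]$ out of every compact subset of $\X(M)$). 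The main obstacle is precisely this last step: organizing the combinatorics of $\out(\pi_1(M))$ — which, unlike $\out(F_n)$, interlaces the mapping class groups of the $S_i$ with free‑product automorphisms — so that bounded growth of \emph{all} separable conjugacy classes genuinely forces $\phi_n$ into a finite set, with partial conjugations and the interaction between the surface factors and the compressible part fully controlled; the secondary difficulty is the compactness of the constrained lamination space needed for quantitative openness in (A). Once these two points are in place, the remainder follows the pattern of Minsky's primitive‑stable theory.
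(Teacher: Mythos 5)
There is a genuine gap, and in fact two independent ones, either of which is fatal to the proposal. First, your definition of ``separable'' is not the one that makes the theorem work. The paper's separable elements are those lying in a \emph{proper free factor} of $\pi_1(M)$ (equivalently, loops homotopic off an essential disc); yours are elements $g$ admitting a splitting $\pi_1(M)=\langle g\rangle * H$ with all $\pi_1(S_i)$ conjugate into $H$, i.e.\ a free-product analogue of Minsky's primitives. By uniqueness of the Grushko decomposition, such $g$ exist only when the free part $F_\ell$ is nontrivial; for a compression body with no solid-torus summands (e.g.\ the boundary connect sum of two or three trivial $I$-bundles over closed surfaces, which the theorem must cover) your set of separable elements is \emph{empty}, so your $\mathcal{SS}(M)$ is all of $\mathcal{X}(M)$ and the conclusion fails. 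Even when $F_\ell\neq 1$, the finiteness step in (C) needs a generating set whose elements and two-fold products are all separable; with your definition the generators of the surface factors are excluded, which is exactly why you are forced into the delicate and unresolved analysis of the $\prod_i\Mod^{\pm}(S_i)$ and partial-conjugation parts of $\out(\pi_1(M))$. (Your quasi-Fuchsian lemma also has a flaw as written: $\|gh\|$ is comparable to $1+\|h\|_{\mathrm{conj}}$, not $1+|h|$, so the argument only bounds translation lengths from below by conjugacy length, not the orbit map by word length.)

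Second, and decisively, the boundary points you produce in (B) cannot lie in \emph{any} domain of discontinuity, so whatever set contains them is not one. If $\rho(c)$ is parabolic for a simple closed curve $c$ on an interior boundary component $S_i$, then $c$ lies in the proper free factor $\pi_1(S_i)$, and $[\rho]$ is an algebraic limit of representations $[\rho_j]$ with $\rho_j(c)$ elliptic of finite order $n_j$; each such $[\rho_j]$ is fixed by the infinite-order partial conjugation that conjugates the factor containing $c$ by $c^{n_j}$ and is the identity elsewhere, hence has infinite stabilizer in $\out(\pi_1(M))$. This is precisely the content of the paper's final proposition of Section 4, and it shows that the action cannot be properly discontinuous on any open set containing your pinched representations --- your heuristic that ``partial conjugations by elements of nontrivial $\rho$-image push $[\rho]$ out of every compact set'' is exactly what fails here. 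The paper instead locates its boundary points by degenerating the \emph{exterior} (compressible) boundary: geometrically finite representations with a cusp at a Masur domain curve, or purely hyperbolic representations whose exterior end is geometrically infinite. Proving that these are separable-stable is the real work of the paper and uses machinery entirely absent from your proposal: a compactness criterion (all separable geodesics lie in a compact core), Cannon--Thurston maps (Floyd, Mj) to identify the limiting pair of endpoints with the cusp curve or ending lamination, and Otal's Whitehead-graph dichotomy, which says that separable elements have a Whitehead graph with a strong cutpoint or a non--strongly-connected component while Masur domain laminations in tight position do not. Without replacing both your notion of separability and your choice of boundary points, the argument cannot be repaired.
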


In proving the theorem we show that pinching a Masur domain curve or lamination on the boundary component are points in this domain of discontinuity.

Canary-Storm (\cite{can-sto}) showed that whenever $M$ has an primitive essential annulus the action of $\out(\pi_1(M))$ cannot be properly discontinuous on all of $AH(M).$  As compression bodies contain primitive essential annuli, one cannot hope to obtain a domain of discontinuity for this action containing all of $AH(M).$

Theorem \ref{compbody} is a generalization of a result by Minsky (\cite{min}) in the case when $M$ is a handlebody.  He introduced the notion of primitive-stable representations and showed that if $H_g$ is a genus $g$ hyperbolizable handlebody, then the set of primitive-stable representations, denoted $\mathcal{PS}(H_g)$, is a domain of discontinuity for the action of $\out(F_g)$ containing the interior of $AH(H_g)$ as well as points on $\partial AH(H_g)$, where $F_g$ is the free group on $g$ generators.  The set $\mathcal{SS}(H_g)$ of separable-stable representations, in the case when $M$ is a handlebody of genus $g,$ is contained in but not a priori equal to $\mathcal{PS}(H_g)$, although the two sets coincide on $AH(H_g)$.

The incompressible boundary case was resolved by Canary-Storm (\cite{can-sto}), Canary-Magid (\cite{can-mag}) and Lee (\cite{lee1}); they showed that in this case, there exists an open $\out(\pi_1(M))$-invariant set, containing the interior of $AH(M)$ and points on the boundary of $AH(M)$, on which $\out(\pi_1(M))$ acts properly discontinuously if and only if $M$ is not a trivial $I$-bundle over a closed orientable hyperbolic surface.

We end the introduction with a brief outline of the paper.  In Section \ref{background} we recall background material from topology and hyperbolic geometry that we will need.  In Section \ref{ssreps} we define the set of separable-stable representations and show that it is an open, $\out(\pi_1(M))$-invariant subset of $\mathcal{X}(M)$ containing the interior of $AH(M)$ on which $\out(\pi_1(M))$ acts properly discontinuously.  In Section \ref{boundarypoints} we find two types of points on $\partial AH(M)$ that are separable-stable; namely points $[\rho]$ whose associated hyperbolic manifold $N_\rho$ is homeomorphic to the interior of $M$ that satisfy one of the following two conditions: $N_\rho$ is geometrically finite with one cusp associated to a Masur domain curve or $N_\rho$ is purely hyperbolic with one geometrically infinite end corresponding to the compressible boundary component.  As the interior of $AH(M)$ consists of convex cocompact representations, these points lie on $\partial AH(M)$.  This will complete the proof of Theorem \ref{compbody}.  In Section \ref{boundarypoints} we make use of Otal's generalization of Whitehead graphs to compression bodies described in (\cite{ota}).  As it is difficult to procure, we give proofs of many of the results we use.  In Section \ref{nonhomeo}, we study further the structure of $\mathcal{SS}(M)$ and show that when $M$ is a large compression body, there exist connected components of $\mathcal{SS}(M)$ that coincide with components of the interior of $AH(M).$

\vspace{0.2in}
\textbf{Acknowledgements: } The results in this paper form part of the author's Ph.D. thesis under the guidance of Dick Canary.  The author warmly thanks Dick Canary for his valuable support and advice.

\section{Preliminaries} \label{background}

\subsection{Compression bodies} \label{compbodyback}
A \emph{compression body} is a compact, orientable, irreducible $3$-manifold $M$ with a boundary component $\partial_{ext}M$, called the \emph{exterior boundary}, whose inclusion induces a surjection $\pi_1(\partial_{ext}M) \rightarrow \pi_1(M)$.  The other boundary components are called \emph{interior boundary} components.  Equivalently, a compression body is a boundary connect sum of a $3$-ball, a collection of solid tori and a collection of trivial $I$-bundles over closed surfaces such that the other summands are attached to the $3$-ball along disjoint discs.

A compression body is \emph{trivial} if it is a trivial $I$-bundle over a closed surface.
A compression body is \emph{small} if there exists an essential, properly embedded disc $D$ such that $M-D$ is either two trivial $I$-bundles over closed surfaces or one trivial $I$-bundle over a closed surface; otherwise $M$ is a \emph{large} compression body.

The fundamental group of a compression body can be expressed as $G_1 * G_2 * \cdots * G_n$ where $G_i$ is isomorphic to a closed surface group for $1 \leq i \leq k$ and $G_j$ is infinite cyclic $k<j \leq n$.  By Grushko's theorem (\cite{gru}) and Kurosh's subgroup theorem (\cite{kur}) any other decomposition of the fundamental group into a free product, $H_1 * H_2 * \cdots H_m,$ where each factor is freely indecomposable, satisfies $n=m$ and $H_i \cong G_i$, up to re-ordering.

One can also think about splittings of $\pi_1(M)$ into free products geometrically.  Suppose that $D$ is a properly embedded essential disc in $M$ and $M_1$ and $M_2$ are the components of $M\backslash \mathcal{N}(D)$ where $\mathcal{N}(D)$ is a regular neighborhood of $D.$  If $M_i'=M_i \cup \mathcal{N}(D)$ then $\pi_1(M) \cong i_*(\pi_1(M_1')) * i_*(\pi_1(M_2'))$ where $i$ is inclusion and the basepoint is chosen to lie in $D.$  As the basepoint my change, this splitting is only well-defined up to conjugation.

As the following lemma shows, the converse is also true.
\begin{lemma}\label{mer}
Let $M$ be a compression body and $\pi_1(M)=H*K$ a nontrivial splitting of $\pi_1(M)$ into a free product.  Then, there exists a properly embedded disc $D$ realizing the splitting in the sense described above.

\end{lemma}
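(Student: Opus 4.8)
The plan is to realize the given algebraic splitting $\pi_1(M) = H * K$ by a properly embedded disc using a combination of the sphere theorem / disc theorem machinery and the structure theory of compression bodies. First I would reduce to the case where $H$ and $K$ are each free products of closed surface groups and infinite cyclic groups, which is automatic: by the Kurosh subgroup theorem applied to the free product decomposition of $\pi_1(M)$ into freely indecomposable factors, each of $H$ and $K$ inherits such a decomposition (up to conjugacy), and in particular each is finitely generated. The key input is that $M$ is irreducible with a compressible exterior boundary component, so $\pi_1(M)$ splits freely exactly when $M$ admits an essential compressing disc; the goal is to control \emph{which} disc.

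The main tool I would invoke is Stallings' theorem (or, in the $3$-manifold setting, the loop theorem together with an innermost-disc argument): since $\pi_1(M) = H * K$ is a nontrivial free splitting, $M$ is not a $K(\pi,1)$ with $\pi$ freely indecomposable, hence $\partial M$ is compressible and there is \emph{some} essential properly embedded disc $D_0$ in $M$. Cutting along $D_0$ gives a free splitting $\pi_1(M) = A * B$ (or an HNN-type reduction, but since $\partial_{ext}M$ is the only compressible component and $M$ is a compression body one gets an honest free product). Now I would argue by induction on the complexity (say, the number of $S^1$-factors plus closed-surface factors, i.e. the rank of the free decomposition into freely indecomposable pieces) of $\pi_1(M)$: the splitting $A * B$ corresponds to a disc, and using the uniqueness part of the Kurosh/Grushko statement quoted in the excerpt, the freely indecomposable factors of $A$ and $B$ together are exactly those of $H$ and $K$ together. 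I would then show that one can modify the collection of discs — sliding, adding parallel copies, and recombining along a "handlebody" piece of the compression body — so that the resulting single disc separates $M$ into pieces $M_1', M_2'$ with $i_*(\pi_1(M_1')) = H$ and $i_*(\pi_1(M_2')) = K$ up to conjugacy. Concretely, it is cleanest to pass to the standard model of $M$ as a $3$-ball with $1$-handles and $I$-bundles attached along discs, where a free splitting of $\pi_1$ is visibly realized by a disc in the $3$-ball separating the attaching regions according to the partition of the factors; one then transports this model disc back to $M$ via the (non-canonical) homeomorphism.

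The hard part will be the bookkeeping that guarantees the disc can be chosen so that each side captures \emph{exactly} the prescribed factor group $H$ or $K$, rather than merely some free splitting refining or related to it. The issue is that a single indecomposable free factor of $\pi_1(M)$ — say a closed surface group coming from an $I$-bundle summand — might naively be "split across" a carelessly chosen disc; one must verify that an essential disc can always be isotoped off the interior boundary components and the cocores of the $I$-bundles, so that each indecomposable factor lies entirely on one side. This is exactly the sort of statement that the standard model makes transparent: any essential disc is isotopic to one lying in the union of the $3$-ball and the $1$-handles, disjoint from the $I$-bundle summands, so the indecomposable surface-group factors are never cut. Once that normalization is in place, realizing an arbitrary partition of the factors (equivalently, an arbitrary free splitting $H*K$) by a single separating disc in the $3$-ball-with-$1$-handles is elementary. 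I would therefore organize the proof as: (1) reduce to finitely generated $H$, $K$ via Kurosh; (2) normalize discs in the standard model so no indecomposable factor is cut; (3) given the partition of indecomposable factors induced by $H * K$, explicitly build the separating disc in the model; (4) transport back to $M$.
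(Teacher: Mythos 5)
Your overall skeleton matches the paper's: realize the ``standard'' splittings in the boundary-connect-sum model by explicit discs, use Grushko/Kurosh uniqueness to match the abstract factors of $H*K$ with the standard factors, and then move a model disc to one realizing $H*K$. The genuine gap is in the last step. A splitting $\pi_1(M)=H*K$ is a pair of specific subgroups (up to conjugacy), not just a partition of isomorphism types of indecomposable factors; two splittings with isomorphic factors differ by an automorphism of $\pi_1(M)$ that need not be inner. So ``transporting the model disc back to $M$ via the (non-canonical) homeomorphism'' only produces a disc realizing \emph{some} splitting with the right isomorphism types. To get the prescribed $H*K$ you must exhibit a homeomorphism of $M$ inducing the automorphism $\phi$ that carries the standard splitting $G_H*G_K$ to $H*K$, and for compression bodies with surface-group factors not every automorphism of $\pi_1(M)$ is induced by a homeomorphism. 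Your ``sliding, adding parallel copies, and recombining'' of discs is exactly the question of which outer automorphisms are geometric, and you give no argument for why the needed one is.

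The paper closes this gap with McCullough--Miller (Corollary 5.3.3): the cosets of $\homeo^+(M)$ in $\out(\pi_1(M))$ are represented by compositions of automorphisms $f_i$ that reverse orientation on a single surface-group factor $G_i$ and are the identity on the others. Since each $f_i$ preserves every standard splitting, one may precompose $\phi$ with suitable $f_i$'s to make it geometric without changing the splitting it induces, and then push the model disc forward by the realizing homeomorphism. By contrast, the ``hard part'' you flag --- that a carelessly chosen disc might cut an indecomposable surface-group factor --- is a real but secondary point (essential discs can be isotoped off the $I$-bundle pieces because their frontiers are incompressible), and resolving it does not address the realizability issue above. If you wanted to avoid McCullough--Miller entirely you would need to invoke, or prove, the disc version of the Kneser conjecture for irreducible bounded $3$-manifolds (that \emph{every} free splitting is induced by a properly embedded disc), which is a stronger statement than the mere compressibility you extract from Stallings and the loop theorem.
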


\begin{proof}
Recall that $M$ is a boundary connect sum of a collection of solid tori, a collection of trivial $I$-bundles and a $3$-ball such that the other summands are attached to the $3$-ball along disjoint discs.  The splitting corresponding to this connect sum, $G=G_1 * G_2 * \cdots * G_n,$ where $G_i$ is isomorphic to the fundamental group of a closed surface for $1 \leq i \leq k$ and $G_j$ is infinite cyclic for $k < j \leq n$ has the following property.  If $\sigma$ is any permutation of $\{1, \ldots, n\}$ then the splitting $G'*G''$ where $G'=G_{\sigma(1)} * \cdots * G_{\sigma(l)}$ and $G''=G_{\sigma(l+1)} * \cdots * G_{\sigma(n)}$ is realizable by an essential disc $\Delta_\sigma$.

A result of McCullough-Miller (\cite{mcc-mil} Corollary 5.3.3) describes the cosets of $\homeo^+(M)$, the group of orientation preserving homeomorphisms, in $\out(\pi_1(M))$ in the following way.  For each surface group factor $G_i$, let $f_i: G \rightarrow G$ be an automorphism such that $f_i|_{G_j}=\id|_{G_j}$ for $i \neq j$ and $f_i|_{G_i}$ is an orientation-reversing automorphism; notice that $f_i$ and $f_j$ commute for all $i, j \leq k$.  The cosets of $\homeo^+(M)$ in $\out(\pi_1(M))$ are $\{f_{i_1} \circ f_{i_2} \circ \cdots \circ f_{i_l} \cdot \homeo^+(M)\}$ for $1 \leq i_1 < i_2 < \cdots < i_l \leq k$.

Let $\sigma$ be a permutation of $\{1, \ldots, n\}$ such that $G_H=G_{\sigma(1)} * \cdots * G_{\sigma(l)}$ is isomorphic to $H$ and $G_K=G_{\sigma(l+1)} * \cdots * G_{\sigma(n)}$ is isomorphic to $K$.  Such a permutation must exist by the uniqueness of a decomposition of $G$ into a free product with freely indecomposable factors.  By the discussion above, there exists a disc $D'$ realizing the splitting $G=G_H * G_K$.  We can find an automorphism $\phi: G \rightarrow G$ such that $\phi(G_H)=H$ and $\phi(G_K)=K$.  Notice that the automorphisms $f_i$ do not affect the splitting of $G$.  By pre-composing with $f_i,$ if necessary, we can assume that $\phi|_{G_i}$ for $i \leq k$ is orientation-preserving.  Hence, $\phi$ is realizable by a homeomorphism $f_{\phi}$ and we can take $D$ to be the image of $f_\phi(D')$.
\end{proof}

\subsection{Hyperbolic geometry}
For this section, let $N$ denote a hyperbolic $3$-manifold, that is $N \cong \H^3 / \Gamma,$ where $\Gamma$ is a discrete group of orientation-preserving isometries of $\H^3.$
Given $\epsilon>0,$ the \emph{$\epsilon$-thin part} of $N$ is
$$N_{\thin(\epsilon)}=\{x \in N | \inj_N(x) < \epsilon\},$$
where $\inj_N(x)$ is the injectivity radius of $N$ at the point $x$.  The \emph{$\epsilon$-thick part} $N_{\thick(\epsilon)}$ is the complement of $N_{\thin(\epsilon)}.$  There exists a constant $\mu_3>0,$ called the \emph{Margulis constant,} such that for any hyperbolic $3$-manifold $N$ and any $\epsilon < \mu_3,$ each component of
$N_{\thin(\epsilon)}$ is one of the following (see  \cite{ben-pet}, Chp. D):
\begin{enumalph}
\item a metric neighborhood of a closed geodesic,
\item a parabolic cusp homeomorphic $S^1 \times \mathbb{R} \times (0, \infty),$ or
\item a parabolic cusp homeomorphic to $T \times (0, \infty),$ where $T$ is a torus.
\end{enumalph}
Let $N^0_\epsilon$ denote $N$ with all components of type (b) and (c) removed.

The \emph{convex core} $C(N)$ of $N$ is the smallest convex submanifold of $N$ such that the inclusion of $C(N)$ into $N$ is a homotopy equivalence.  The \emph{limit set} $\Lambda(\Gamma) \subset \partial \H^3$ is the smallest, closed $\Gamma$-invariant subset of $\partial \H^3$.  When $\Gamma$ is non-elementary, $C(N)$ is $CH(\Lambda(\Gamma))/\Gamma,$ where $CH(\Lambda(\Gamma))$ is the convex hull of the limit set $\Lambda(\Gamma)$.  $N$ is called \emph{convex cocompact} if $C(N)$ is compact.  $N$ is called \emph{geometrically finite} if $C(N) \cap N^0_\epsilon$ is compact and \emph{geometrically infinite} otherwise.

In general, when $\pi_1(N)$ is finitely generated there exists a compact submanifold $C,$ called the \emph{compact core}, whose inclusion induces a homotopy equivalence with $N$ (see \cite{sco}).   Moreover, $C$ can be chosen such that it intersects each component of the noncompact portion of $N_{\thin(\epsilon)}$ in a single incompressible annulus if the component is homeomorphic to $S^1 \times \mathbb{R} \times (0, \infty)$ or a single incompressible torus if the component is homeomorphic to $T \times (0, \infty)$ (see \cite{mcc}).  A compact core that intersects each component of the noncompact portions of $N_{\thin(\epsilon)}$ in this way is called a \emph{relative compact core}.

The \emph{domain of discontinuity} $\Omega(\Gamma)$ is the complement of $\Lambda(\Gamma)$ in $\partial \H^3$; it is the largest open set of $\partial \H^3$ on which $\Gamma$ acts properly discontinuously.  It can be uniquely endowed with a $\Gamma$-invariant hyperbolic metric, conformally equivalent to the metric induced by considering $\Omega(\Gamma)$ as a subset of $\partial \H^3 \cong \mathbb{C}P^1$.  The \emph{conformal boundary} of $N$ is $\partial_C N=\Omega(\Gamma)/ \Gamma$ a collection of hyperbolic surfaces obtained by taking the quotient of $\Omega(\Gamma)$ by $\Gamma.$  The conformal bordification of $N,$ $(\H^3 \cup \Omega(\Gamma))/\Gamma$ is homeomorphic to $C(N),$ except when $\Gamma$ is Fuchsian.

\subsubsection{Measured laminations}
Let $T$ be a closed hyperbolic surface.  A \emph{(geodesic) lamination} on $T$ is a closed subset $\lambda \subset T$ which is a union of disjoint simple geodesics.  A \emph{leaf} of $\lambda$ is a simple geodesic in $\lambda$.  A lamination $\lambda$ is \emph{minimal} if each half-leaf is dense in $\lambda.$  A \emph{measured lamination} is a pair $(\lambda, \nu)$ where $\lambda$ is a geodesic lamination and $\nu$ is a Borel measure on arcs transverse to $\lambda$ such that the support of $\nu$ is $\lambda$ and $\nu$ is invariant under isotopies of $T$ preserving $\lambda$.  Let $ML(T)$ denote the space of measured laminations on $T$ with the weak-$*$ topology on measures and let $PML(T)$ denote $(ML(T)-\{\emptyset\})/\mathbb{R}^+,$ the space of projective measured laminations.  Weighted simple closed geodesics are dense in $ML(T)$ (\cite{thu}, Proposition 8.10.7).  For two simple closed geodesics $\gamma_1$ and $\gamma_2$ the intersection number $i(\gamma_1, \gamma_2)$ is the number of points in $\gamma_1 \cap \gamma_2$.  This intersection number naturally extends to any two weighted simple closed geodesics and furthermore to a continuous map $i: ML(T) \times ML(T) \rightarrow \mathbb{R}_{\geq 0}$ (see \cite{bon1} Proposition 4.4).  A measured lamination $(\lambda, \mu)$ is \emph{filling} if for any other measured lamination $(\alpha, \nu)$ with different support, $i( (\lambda, \mu), (\alpha, \nu))$ is nonzero.

\subsubsection{Masur domain laminations}
For this section suppose that $M$ is a compression body.  A \emph{meridian} is a simple closed curve on $\partial M$ that is nontrivial in $\pi_1(\partial M)$ but is trivial in $\pi_1(M).$
Let $\mathcal{M}$ denote the set of meridians on $\partial M$ and let $\mathcal{M}'$ denote the closure of $\mathcal{M}$ in $PML(\partial_{ext}(M))$ (here we fix a convex cocompact hyperbolic structure on $M$ and consequently a hyperbolic structure on $\partial M$).
Let $$\mathcal{M}''=\{\lambda \in PML(\partial_{ext}(M)) | \text{  there exists a  } \nu \in M' \text{  such that  } i(\lambda, \nu) = 0\}.$$

\begin{definition}
If $M$ is not the connect sum of two trivial $I$-bundles over closed surfaces, then $\lambda$ in $PML(\partial_{ext}(M))$ lies in the \emph{Masur domain}, denoted $\mathcal{O}(M),$ if it has nonzero intersection number with every element of $\mathcal{M}'$.
If $M$ is a connect sum of two trivial $I$-bundles over closed surfaces, then an element $\lambda$ in $PML(\partial_{ext}(M))$ lies in $\mathcal{O}(M)$ if it has nonzero intersection number with every lamination in $\mathcal{M}''$.
\end{definition}

The Masur domain $\mathcal{O}(M)$ is an open set in $PML(\partial_{ext}(M))$ on which $\Mod(M)$, the group of isotopy classes of homeomorphisms of $\partial_{ext}(M)$ that extend to homeomorphisms of the compression body, acts properly discontinuously (\cite{mas} and \cite{ota} Proposition 1.4).  In particular, if $\lambda$ is the support of an element in $\mathcal{O}(M),$ then $\lambda$ intersects every essential annulus in $M$.  Otherwise, Dehn twists about an essential annulus missing $\lambda$ produce infinitely many elements in the stabilizer of $\lambda$ in $\Mod(M)$, which would contradict the proper discontinuity of the action.

For a general hyperbolizable $3$-manifold $M$ with compressible boundary, Lecuire described an extension of the Masur domain, called the set of \emph{doubly incompressible} laminations.  A measured lamination $(\lambda, \mu)$ is doubly incompressible if there exists a constant $\eta > 0$ such that $i(\lambda, \partial E) > \eta$ for any essential disc or essential annulus $E$.  When $M$ is a compression body, the pre-image of $\mathcal{O}(M)$ in $ML(S)$ is contained in $\mathcal{D}(M)$ (\cite{lec}, Lemma 3.1).

\subsection{Pleated surfaces} \label{psurf}
Pleated surfaces are one type of 1-Lipschitz maps from negatively curved surfaces into hyperbolic $3$-manifolds.

\begin{definition}
A \emph{pleated surface} in a hyperbolic $3$-manifold $N$ is a surface $S$ with a hyperbolic metric $\tau$ of finite area and a map $h: (S, \tau) \rightarrow N$ which takes rectifiable arcs in $S$ to rectifiable arcs of the same length in $N$ such that every point $x$ in $S$ lies in the interior of some geodesic arc that is mapped by $p$ to a geodesic arc in $N$.  The \emph{pleating locus} is the set of points in $S$ that lie in the interior of exactly one geodesic arc that is mapped to a geodesic arc in $N$.
\end{definition}
The pleating locus is a geodesic lamination that maps to a union of geodesics in $N$.  Although there can be many pleated surfaces in a fixed homotopy class realizing a geodesic lamination $\lambda,$ the image of $\lambda$ in $N$ is unique in that homotopy class (\cite{ceg}, Lemma I.5.3.5).  If $T$ is an incompressible boundary component of $N$ and $\lambda$ is a filling lamination on $T,$ then $\lambda$ can be realized as the pleating locus of a pleated surface homotopic to the inclusion of $T$ in $N$.  When $T$ is a compressible boundary component of $N$ and $\lambda \subset T$ is the support of a doubly incompressible lamination, then $\lambda$ can be realized as the pleating locus of a pleated surface homotopic to the inclusion of $T$ in $N$ (\cite{lec}, Theorem 5.1).
\subsubsection{Ends of hyperbolic manifolds}\label{ends}
The ends of $N$ are in one-to-one correspondence with the components of $\partial C-P,$ where $C$ is a relative compact core and $P$ is the intersection of $C$ with the noncompact components of $N_{\thin(\epsilon)}$ (for a precise definition of ends see \cite{kap}, Section 4.23).  Any hyperbolic $3$-manifold $N$ with finitely generated fundamental group has finitely many ends.  An end is \emph{geometrically finite} if it has a neighborhood $U$ which does not intersect $C(N)$ and is \emph{geometrically infinite} otherwise.  By the Tameness Theorem (\cite{ago}, \cite{cal-gab}), we can choose a relative compact core $C$ such that $N_\epsilon^0 - \inte(C)$ is homeomorphic to $\partial C - P \times [0, \infty),$ where $\inte(C)$ is the interior of $C$.  Suppose that $E$ is a component of $N_\epsilon^0 - \inte(C)$ homeomorphic to $T \times [0, \infty).$  If $E$ is geometrically infinite, then there exists a sequence $\alpha_i$ of closed geodesics, homotopic in $E$ to simple closed curves on $T$ that leave every compact set of $E$.  Fix a hyperbolic surface $T'$ and a homeomorphism $T' \rightarrow T$.  If $\alpha_i'$ is the geodesic on $T'$ mapping to $\alpha_i,$ then $\alpha_i'/l_{T'}(\alpha_i')$ converges in $\mathcal{ML}(T')$ to a measured lamination $(\lambda, \mu)$ such that its support $\lambda$ is independent of the sequence $\{\alpha_i\}$ (\cite{bon1}, \cite{can2}).  In this situation $\lambda$ is called the ending lamination for the end $E$.  It is minimal and is the support of a filling doubly incompressible lamination (\cite{can2} Corollary 10.2).

\subsubsection{Cannon-Thurston maps}\label{ctmaps}
Let $X$ and $Y$ be Gromov hyperbolic metric spaces and $i: X \rightarrow Y$ an embedding.  A \emph{Cannon-Thurston} map for $i$ is a continuous extension $\hat i: \hat X = X \cup \partial X \rightarrow \hat Y=Y \cup \partial Y$ where $\partial X$ and $\partial Y$ are the geodesic boundaries of $X$ and $Y,$ respectively.  By continuity, if $\hat i$ exists, it is unique.  If $i$ is a quasi-isometric embedding, then the existence of $\hat i$ is immediate since two geodesics that are within a bounded distance of each other in $X$ map to two quasi-geodesics that are within a bounded distance of each other in $Y$.  Cannon and Thurston (\cite{can-thu}) showed the existence of such maps when $Y$ is the Cayley graph of the fundamental group a closed hyperbolic $3$-manifold fibering over the circle, and $X$ is the subgraph associated to the fiber subgroup.

Suppose $\rho$ is a discrete and faithful representation of $G$ into $\textup{PSL}(2, \mathbb{C})$ and $\tau_\rho: C_S(G) \rightarrow \H^3$ is an orbit map (see Section \ref{ssreps} for more details on this map).  The existence of Cannon-Thurston maps for $\tau_\rho$ and the characterization of points that are not mapped injectively by such maps is a well-studied problem.  The results we will use in this paper are due to Floyd (\cite{flo}) for the case when $\rho$ is geometrically finite, and Mj (\cite{mj}) for the case when $\rho$ is geometrically infinite without parabolics.

\begin{theorem}[Floyd]
Let $\rho$ be a geometrically finite representation of $G$.  Then, $\tau_\rho: C_S(G) \rightarrow \H^3$ extends continuously to $\bar \tau_\rho: \overline{C_S(G)} \rightarrow \overline{ \H^3}$.  Moreover, $\bar \tau_\rho$ is $2:1$ onto parabolic points of rank one and injective elsewhere.
\end{theorem}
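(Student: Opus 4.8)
The plan is to follow Floyd's original strategy: put a rescaled (Floyd) metric on $G$ whose completion is $\overline{C_S(G)}$, show the orbit map $\tau_\rho$ is uniformly continuous into $\overline{\H^3}$ equipped with the spherical metric, and then read off the fibers of the boundary extension from the structure of geometrically finite limit sets. Since $M$ has no toroidal boundary components, $\rho$ has only rank-one cusps, so every maximal parabolic subgroup of $G=\pi_1(M)$ is virtually cyclic; geometric finiteness then forces $G$ to be word-hyperbolic, so $\partial C_S(G)$ is the Gromov boundary, and for a summable admissible rescaling function $f$ one checks that the Floyd completion of $(G,f)$ recovers $\overline{C_S(G)}=C_S(G)\cup\partial C_S(G)$. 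Fix $o\in\H^3$, let $\tau_\rho$ send a vertex $g$ to $\rho(g)\cdot o$ and each edge to the corresponding geodesic segment, and note $\tau_\rho$ is proper because $\rho$ is discrete. As $\overline{C_S(G)}$ is compact and metrizable, a continuous extension $\bar\tau_\rho$ is unique if it exists, and existence amounts to the estimate: for every $\epsilon>0$ there are $R,\delta>0$ such that any edge-path in $C_S(G)$ of Floyd-length $<\delta$ whose vertices all lie at word-distance $>R$ from $1$ has $\tau_\rho$-image of spherical diameter $<\epsilon$ in $\overline{\H^3}$.

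Establishing this estimate is the heart of the matter, and it is where geometric finiteness is used. I would fix a relative compact core and use the thick--thin decomposition of $N_\rho=\H^3/\rho(G)$ to break such a path into subpaths whose orbit points lie in the $\epsilon_0$-thick part of the convex core and subpaths lying in horoball neighborhoods of the rank-one cusps. Geometric finiteness means $G$ acts cocompactly on the truncated convex hull obtained from $CH(\Lambda(\rho))$ by deleting the open parabolic horoballs, so by the \v{S}varc--Milnor lemma the word metric on the thick vertices is comparable to hyperbolic distance to $o$; hence a thick subpath far from $1$ maps into a region of $\H^3$ deep toward $\partial\H^3$, while the bounded Floyd-length bounds the hyperbolic length of its image, so its spherical diameter is at most a constant times $e^{-R}$. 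Inside a horoball the peripheral $\mathbb{Z}$ is undistorted in the hyperbolic group $G$ and the hyperbolic geometry is explicit, so a direct computation matching the decay of $f$ against the exponential distortion of the cusp subgroup under $\rho$ shows that a short, far-from-$1$ cusp subpath maps to a set of small spherical diameter clustered near the corresponding parabolic fixed point. Concatenating the pieces along the path gives the estimate, hence the continuous $G$-equivariant extension $\bar\tau_\rho\colon\overline{C_S(G)}\to\overline{\H^3}$; since the $\rho(G)$-orbit of $o$ accumulates precisely on $\Lambda(\rho)$, the image of $\partial C_S(G)$ is $\Lambda(\rho)$. (Equivalently one can run this through Bowditch's neutered space or the Groves--Manning cusped space, reducing the existence of $\bar\tau_\rho$ to a Cannon--Thurston map for a quasi-isometric model; I expect the same analytic input to be required.)

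To compute the fibers I would invoke the Beardon--Maskit dichotomy: every point of $\Lambda(\rho)$ is either a conical limit point or a bounded parabolic fixed point. If $\xi$ is conical, then conicality together with the thick-part quasi-isometry forces the $\tau_\rho$-image of any geodesic ray in $C_S(G)$ landing on $\xi$ to stay boundedly close to the thick part and hence to be a quasigeodesic ray to $\xi$ in $\H^3$; were there two distinct boundary points over $\xi$, the bi-infinite geodesic joining them would map to a quasigeodesic line in $\H^3$ with both endpoints equal to $\xi$, which is impossible, so $\bar\tau_\rho^{-1}(\xi)$ is a single point. If $\xi$ is a rank-one bounded parabolic fixed point with stabilizer $P$, then $P$ is virtually $\mathbb{Z}$ and, being undistorted in the hyperbolic group $G$, contributes exactly the two boundary points $\eta^{\pm}=\lim_{n\to\pm\infty}p^n$ for a generator $p$; both are sent to $\xi$ since a parabolic translates $o$ toward its fixed point in both directions, and $\eta^+\ne\eta^-$ because $P$ is two-ended. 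Conversely, since $\xi$ is a \emph{bounded} parabolic point it is not approached radially, so any geodesic ray with $\tau_\rho$-image converging to $\xi$ must eventually stay inside the horoball at $\xi$, and the cusp estimate forces it to fellow-travel the $P$-orbit, i.e.\ to represent $\eta^+$ or $\eta^-$. Hence $\bar\tau_\rho$ is exactly $2:1$ over the rank-one parabolic fixed points and injective elsewhere, as claimed.

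The step I expect to be the main obstacle is the geometric estimate of the second paragraph: one must make the thick-part quasi-isometry constants and the explicit horoball computation fit together uniformly along an arbitrary path, and, crucially, choose the rescaling function $f$ so that it simultaneously induces the compactification $\overline{C_S(G)}$ and decays no faster than the exponential distortion of the cusp subgroups, all while controlling paths that make deep excursions into a cusp before escaping. By contrast the fiber analysis is comparatively soft once this estimate and the Beardon--Maskit characterization are in hand; the only delicate point there is excluding preimages of a bounded parabolic fixed point coming from rays that spiral ever more deeply into the cusp, which is again handled by the cusp estimate.
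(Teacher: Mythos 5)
The paper does not prove this statement: it is quoted as Floyd's theorem and used as a black box, with the proof residing in \cite{flo}. Your outline is essentially a reconstruction of Floyd's original argument --- the rescaled (Floyd) metric, the uniform continuity estimate obtained by splitting an orbit path into thick pieces (controlled by cocompactness of the action on the truncated convex hull) and cusp excursions (controlled by matching the decay of the scaling function against the logarithmic distortion $d_{\H^3}(o,p^n o)\sim 2\log n$ of a rank-one parabolic), and the Beardon--Maskit dichotomy for the fibers --- together with the additional observation, needed to pass from Floyd's completion of $G$ to $\overline{C_S(G)}$, that the Floyd boundary of the word-hyperbolic group $G=\pi_1(M)$ is its Gromov boundary. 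That translation step is genuinely required and you are right to isolate it; note, though, that $G$ is hyperbolic simply because it is a free product of closed surface groups and infinite cyclic groups, independently of geometric finiteness.

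One step of your fiber analysis does not work as written. To show injectivity over a conical point $\xi$ you argue that two preimages would span a bi-infinite geodesic in $C_S(G)$ mapping to a quasigeodesic in $\H^3$ with both endpoints $\xi$. But $\tau_\rho$ is not a quasi-isometric embedding when $\rho$ has cusps --- as you yourself use in your second paragraph, a parabolic subgroup $\langle p\rangle$ is exponentially distorted, with $d_{\H^3}(o,p^n o)\sim 2\log n$ against word length $n$ --- so geodesics of $C_S(G)$ need not map to quasigeodesics, and a geodesic ray landing on a preimage of a conical point can still make arbitrarily deep (though finite) excursions into cosets of parabolic subgroups (e.g.\ $b\,a\,b\,a^2\,b\,a^3\cdots$ in a free factor with $a$ parabolic), so it need not stay boundedly close to the thick part. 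The conclusion is nevertheless correct; the standard repair is to show that the fibers of $\bar\tau_\rho$ on $\partial C_S(G)$ are exactly the limit sets of the maximal parabolic subgroups (singletons over conical points, and the two-point boundary of a two-ended stabilizer over a rank-one bounded parabolic point), either by Floyd's direct horoball estimate or via the description of the Cannon--Thurston map onto the Bowditch boundary of the relatively hyperbolic pair. With that substitution your outline is a faithful account of Floyd's proof.
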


In order to state Mj's characterization of which points map noninjectively, we need a way to identify endpoints of leaves of ending laminations with points in $\partial C_S(G)$.  Suppose that $\rho$ is a purely hyperbolic geometrically infinite representation of $G$ into $\pslc$.  Let $E$ be a geometrically infinite end of $N_\rho$.  Recall from Section \ref{ends} that we can pick a standard compact core $C$ of $N_\rho$ such that the component of $N_\rho - C$ corresponding to $E$ is homeomorphic to $T \times (0, \infty)$ where $T$ is a boundary component of $M$.  Morever, there is a well-defined ending lamination $\lambda$ on $T$.  Then, $\lambda$ is doubly incompressible (\cite{can2}, Corollary 10.2) if $N_\rho$ has compressible boundary and $\lambda$ is filling if $N_\rho$ has incompressible boundary.  In either case, if $\rho'$ is any convex cocompact representation such that $N_\rho'$ is homeomorphic to $N_\rho,$ then $\lambda$ is realizable by a pleated surface, $p': T \rightarrow N_{\rho'}$ homotopic to the inclusion of $T$ in $N_{\rho'}$.  If $l$ is a leaf of $\lambda,$ then $p'(l)$ is a geodesic and its lift in $\H^3$ has two well-defined endpoints in $\Lambda(\rho'(G))$.  Since $\rho'$ is convex cocompact, $\tau_\rho'$ is a quasi-isometric embedding.  Hence it has a continuous extension $\overline{\tau_\rho'}$ that restricts to a homeomorphism from $\partial C_S(G)$ to $\Lambda(\rho'(G))$.  Under this homeomorphism, the endpoints of $l$ can be identified with two distinct points in $\partial C_S(G)$.

This identification is independent of the choice of pleated surface, since the image of the pleating locus is independent of the choice of pleated surface.  This identification is also independent of the choice of $\rho'$ for if $\rho''$ is another choice of convex cocompact representation with $N_{\rho''}$ homeomorphic to $N_{\rho'},$ then $\partial \tau_{\rho''} \circ \partial \tau_\rho{'}|_{\Lambda(\rho'(G))}^{-1}$ is a homeomorphism from $\Lambda(\rho'(G))$ to $\Lambda(\rho''(G))$ that sends the attracting fixed point an element $\rho'(g)$ to the attracting fixed point of $\rho''(g)$.  If $x$ is an endpoint of a leaf of $p'(l)$, we can find $x_i \rightarrow x$ such that $x_i$ is the attracting fixed point of $\rho'(g_i).$  Then, the attracting fixed points of $\rho''(g_i)$ approach an endpoint of $p''(l)$.  We are now ready to state Mj's result.

\begin{theorem}[Mj]
Let $\rho$ be a purely hyperbolic representation with one geometrically infinite end.  Then, $\tau_\rho: C_S(G) \rightarrow \H^3$ extends continuously to $\bar \tau_\rho: \overline{C_S(G)} \rightarrow \overline{ \H^3}$.  Moreover, $\bar \tau_\rho(a) = \bar \tau_\rho(b)$ for $a \neq b$ in $\partial C_S(G)$ if and only if $a$ and $b$ are either end-points of a leaf of an ending lamination or boundary points of a complementary ideal polygon.
\end{theorem}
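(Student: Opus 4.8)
The plan is to deduce the theorem from the now-standard criterion for the existence of Cannon--Thurston maps together with a detailed description of the geometry of a geometrically infinite end. Throughout, fix a generating set $S$ of $G$ and a relative compact core $C$ of $N_\rho$, so that $\tau_\rho \colon C_S(G) \to \H^3$ is a quasi-isometric embedding on the subgroups carried by the geometrically finite ends. By hypothesis there is exactly one geometrically infinite end $E$; choose the relative core so that $E$ is a neighborhood of a component $T \times [0,\infty)$ of $N_\rho^0 - \inte(C)$, and let $\lambda \subset T$ be its ending lamination. Since $N_\rho$ is purely hyperbolic there are no parabolics, and $\lambda$ is the support of a filling doubly incompressible lamination (\cite{can2}, Corollary 10.2), so by Lecuire's realizability result (\cite{lec}, Theorem 5.1) every leaf of $\lambda$ is realized by a pleated surface homotopic to the inclusion of $T$. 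Fix the sequence $\alpha_i$ of closed geodesics exiting $E$ with $\alpha_i'/l_{T'}(\alpha_i') \to (\lambda,\mu)$ in $\mathcal{ML}(T')$, and pleated surfaces $h_i \colon T \to N_\rho$ realizing $\alpha_i$; these slice $E$ into successive ``pleated blocks.''

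First I would reduce the existence of $\bar\tau_\rho$ to the escaping-geodesics criterion: $\tau_\rho$ extends continuously to $\partial \H^3$ if and only if for every $R>0$ there is $R'>0$ such that any geodesic segment near infinity in the end (realized in some level pleated surface $\widetilde{h_i}$) that lies outside the $R'$-ball about a basepoint maps into $\H^3$ outside the $R$-ball about $\tau_\rho(1)$. To verify this I would carry out Mj's \emph{hyperbolic ladder} construction: given such a geodesic segment $\mu$, assemble a set $L_\mu \subset \H^3$ from lifts of geodesic segments in the successive $\widetilde{h_i}$, joined by uniformly bounded ``rungs,'' and build a coarsely Lipschitz retraction $\Pi_\mu \colon \H^3 \to L_\mu$. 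The heart of the matter is to prove that $L_\mu$ is \emph{uniformly quasiconvex} in $\H^3$, with constants independent of $\mu$; this is a Morse/flaring estimate relying on the product-like model geometry of $E$ (Minsky's model for the end, or Mj's split geometry), adapted to the compressible-boundary setting via the doubly incompressible pleated-surface realizations above. Uniform quasiconvexity forces the $\H^3$-geodesic joining the images of the endpoints of $\mu$ to remain within bounded Hausdorff distance of $L_\mu$; since the $i$-th level of $L_\mu$ sits in the $i$-th slab of $E$, a segment $\mu$ escaping every compact set produces a ladder, hence a geodesic, escaping every compact set of $\H^3$. This establishes the criterion, and therefore the continuous extension $\bar\tau_\rho$.

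Next I would establish the characterization of the non-injective set. For the ``if'' direction, suppose $a,b \in \partial C_S(G)$ are the two endpoints of a leaf $\ell$ of $\lambda$, identified with points of $\partial C_S(G)$ as in the discussion preceding the theorem (the case of two corners of a complementary ideal polygon is handled identically using its spanning geodesics). Approximating $\ell$ by leaves of the weighted curves $\alpha_i$, one sees that $\ell$ is \emph{not} realizable by a pleated surface staying in a fixed neighborhood of $E$ — the realizing geodesics escape $E$ — so along the ladders over longer and longer sub-segments of $\ell$ the two sides are pushed together; passing to the boundary yields $\bar\tau_\rho(a) = \bar\tau_\rho(b)$. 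The ``only if'' direction is the main obstacle, since the ladder gives no separation: one must show $\bar\tau_\rho$ is injective on all remaining pairs. Here I would invoke the full model geometry of $E$ — for $a \neq b$ not paired by $\lambda$ and not corners of a common complementary polygon, the geodesic in the universal cover of $T$ with endpoints $a,b$ crosses $\lambda$ transversally (or lies essentially in a complementary region), and this crossing is ``robust'' along $E$, producing a uniformly bi-Lipschitz-embedded separating piece in the model, detected by the hierarchy or split-geometry structure, whose lift keeps the two geodesic rays to $a$ and $b$ a definite distance apart in $\H^3$ and hence bounds $d(\bar\tau_\rho(a),\bar\tau_\rho(b))$ below. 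Making this lower bound quantitative and uniform along the entire infinite end, and dealing with the compressible-boundary subtleties absent from the closed surface case, is where the bulk of the work lies and is precisely the content of Mj's argument.
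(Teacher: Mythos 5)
The paper does not prove this statement: it is quoted as a theorem of Mj, with \cite{mj} given as the source, exactly as the geometrically finite analogue is attributed to Floyd \cite{flo}. There is therefore no in-paper proof to compare against, and your proposal has to stand on its own as a reconstruction of Mj's argument.

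As such it is a reasonable roadmap but not a proof; the two load-bearing steps are precisely the ones you flag and then defer. First, the uniform quasiconvexity of the hyperbolic ladder $L_\mu$, with constants independent of $\mu$: you call this ``the heart of the matter'' and describe it as a Morse/flaring estimate relying on the product-like model geometry of the end, but no such estimate is carried out, and in the compressible-boundary setting one cannot simply import Minsky's surface-group model --- the pleated surfaces $h_i\colon T\to N_\rho$ are $\pi_1$-surjective but not $\pi_1$-injective, their lifts to $\H^3$ are not disjoint embedded planes, and so the ``slab'' structure separating consecutive levels of the ladder needs a genuinely different justification (this is why Mj works with split geometry, and why the doubly incompressible realizability of $\lambda$ via \cite{lec} must enter quantitatively rather than merely to produce one pleated surface). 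Second, the ``only if'' direction --- injectivity of $\bar\tau_\rho$ away from leaf and polygon pairs --- is declared ``the main obstacle'' and then summarized as ``precisely the content of Mj's argument''; that is a concession that the step is missing, not an argument for it. A further gap specific to this setting: the theorem concerns the Cayley graph of the whole group $G=\pi_1(M)$, a free product of surface groups and a free group, whereas your ladder lives over the single surface $T$ carrying the geometrically infinite end. You would need to explain how the extension over all of $\partial C_S(G)$ (a boundary built from the tree of the free-product decomposition together with the boundaries of the factors) is assembled from the end geometry and the quasi-isometrically embedded geometrically finite pieces, and why no additional identifications arise from the free-product structure. None of this is fatal to the strategy --- it is Mj's strategy --- but as written the proposal records the statements of the hard lemmas rather than their proofs.
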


\section{Separable-stable representations}\label{ssreps}
In this section we describe the set of separable-stable representations and show that it is a domain of discontinuity.  The definition and proof closely follow Minsky's argument in the case that $M$ is a handlebody in \cite{min}.  Much of the content in this section applies more generally than in the situation of a compression body and the general situation is carefully described in \cite{thesis}.  We omit the proofs of some general lemmas and provide references where one can find detailed proofs.

\begin{definition}
If $M$ is a compression body that is not the connect sum of two trivial $I$-bundles over closed surfaces, an element $g$ in $\pi_1(M)$ is \emph{separable} if it corresponds to a loop in $M$ that can be freely homotoped to miss an essential disc.
If $M$ is the connect sum of two trivial $I$-bundles over closed surfaces, an element $g$ in $\pi_1(M)$ is \emph{separable} if it corresponds to a loop in $M$ that can be freely homotoped to miss an essential annulus contained in one of the two trivial $I$-bundles.
\end{definition}

Let $G$ denote the fundamental group of $M$.  For each $g$ in $G$, let $g_-$ and $g_+$ denote the repelling and attracting fixed points of $g$ acting on $\partial C_S(G)$.  Let $\mathcal{L}_{S}(g)$ denote the set of geodesics connecting $g_-$ and $g_+$ and $\mathcal{S}_S$ denote the set of geodesics $l$ in $C_S(G)$ such that $l$ is contained in $\mathcal{L}_{S}(g)$ for some separable element $g$.

Given a representation $\rho: G \rightarrow \pslc)$ and a basepoint $x$ in $\H^3$, there exists a unique $\rho$-equivariant map $\tau_{\rho, x}: C_S(G) \rightarrow \H^3$ taking  the identity to $x$ and edges to geodesic segments.

\begin{definition}
A representation $\rho: G \rightarrow \pslc$ is called \emph{$(K, A)$-separable-stable} if there exists a basepoint $x$ in $\H^3$ such that $\tau_{\rho, x}$ takes all geodesics in $\mathcal{S}_S$ to $(K, A)$-quasi-geodesics.  A representation $\rho$ is \emph{separable-stable} if there exists $(K, A)$ such that $\rho$ is $(K, A)$-separable-stable.
\end{definition}

Separable-stability does not depend on the choice of $x$ or $S$ and is invariant under conjugation (see \cite{thesis} Lemma III.2).  Let $\mathcal{SS}(M)$ denote the set of separable-stable representations in $\X(M).$

The goal of this section is to show that $\mathcal{SS}(M)$ is a domain of discontinuity.

\begin{proposition} \label{compdisc}
$\mathcal{SS}(M)$ is a domain of discontinuity for the action of $\out(\pi_1(M))$ containing the interior of $AH(M)$.
\end{proposition}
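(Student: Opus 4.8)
The plan is to prove the three assertions of the proposition separately: (i) $\mathcal{SS}(M)$ is open in $\X(M)$; (ii) $\mathcal{SS}(M)$ is $\out(\pi_1(M))$-invariant; (iii) $\out(\pi_1(M))$ acts properly discontinuously on $\mathcal{SS}(M)$; and finally (iv) $\mathcal{SS}(M)$ contains the interior of $AH(M)$. This follows Minsky's handlebody argument closely, with separable elements replacing primitive elements.

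For openness, I would adapt the standard stability argument for quasi-geodesics. Fix $[\rho]\in\mathcal{SS}(M)$ with basepoint $x$, so $\tau_{\rho,x}$ sends every geodesic in $\mathcal{S}_S$ to a $(K,A)$-quasi-geodesic. Since the edge set of $C_S(G)$ is generated by the finite generating set $S$, the map $\tau_{\rho,x}$ is coarsely Lipschitz with constants depending continuously on $\rho(s)$, $s\in S$. A nearby representation $\rho'$ (after moving the basepoint to keep $\tau_{\rho',x'}(\mathrm{id})$ fixed) differs from $\rho$ on the generators by a small amount, so on any bounded ball the two orbit maps are uniformly close; combined with the Morse lemma (quasi-geodesics in $\H^3$ are uniformly close to the geodesic with the same endpoints) and the fact that a path which is a local quasi-geodesic at a large enough scale is a global quasi-geodesic, one concludes $\rho'$ is $(K',A')$-separable-stable for slightly worse constants. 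The key point is that the constant controlling ``large enough scale'' is uniform over the whole family $\mathcal{S}_S$, which is what $(K,A)$-separable-stability provides. The $\out$-invariance is essentially formal: if $g$ is separable and $[f]\in\out(G)$, then $f(g)$ is again separable, because $f$ is realized (up to the $f_i$'s, which do not affect the disc or annulus structure) by a homeomorphism of $M$ by the McCullough--Miller description used in Lemma~\ref{mer}, hence carries an essential disc (or essential annulus in an $I$-bundle summand) to another one; so $[f]$ permutes $\mathcal{S}_S$ up to the boundary homeomorphism it induces, and $(\rho\circ f^{-1})$ inherits separable-stability from $\rho$.

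For proper discontinuity, I would use the $(K,A)$ data to get a lower length bound. Suppose $[\rho_n]=[f_n]\cdot[\rho]$ with $[f_n]$ distinct and all $[\rho_n]$ in a fixed compact $L\subset\mathcal{SS}(M)$; I want a contradiction. By compactness and openness, every $[\rho_n]$ is $(K,A)$-separable-stable for a uniform $(K,A)$. Distinctness of the $[f_n]$ in $\out(G)$ forces the word length (in $S$) of $f_n(g)$ for some separable $g$, or more precisely of some carefully chosen separable element, to grow; translating through the orbit map, the translation length of $\rho_n(f_n(g))=\rho(g)$ would have to grow (since a $(K,A)$-quasi-geodesic axis of combinatorial length $\ell$ has translation length $\geq \ell/K - A$ up to constants), but $\rho(g)$ is fixed --- contradiction. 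Making the ``distinctness forces growth'' step precise is the main obstacle: one must produce, for each $n$, a separable $g_n$ on which $f_n$ acts with large word-length growth, using that $\out(G)$ is not too far from $\Mod(M)$ acting on the Masur domain, where proper discontinuity (Masur, Otal, cited above) gives the needed escape. This is exactly the place where the structure theory of compression-body automorphisms from Lemma~\ref{mer} and the McCullough--Miller coset description enters.

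Finally, to see $\mathrm{int}(AH(M))\subset\mathcal{SS}(M)$: the interior of $AH(M)$ consists of convex cocompact representations, for which the orbit map $\tau_\rho$ is a quasi-isometric embedding of $C_S(G)$ into $\H^3$ with some constants $(K_\rho,A_\rho)$; in particular \emph{every} geodesic in $C_S(G)$, a fortiori every geodesic in $\mathcal{S}_S$, maps to a $(K_\rho,A_\rho)$-quasi-geodesic, so $\rho$ is separable-stable. I would also remark here that separable-stability only cares about the image of geodesics connecting fixed points of separable elements, which is why $\mathcal{SS}(M)$ is strictly larger than $\mathrm{int}(AH(M))$ --- the verification that it properly contains boundary points is deferred to Section~\ref{boundarypoints}. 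Assembling (i)--(iv) gives the proposition.
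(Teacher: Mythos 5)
Your overall architecture (openness, invariance, proper discontinuity, containment of the interior of $AH(M)$) matches the paper's, and three of the four steps are essentially sound: the openness sketch via local-to-global stability of quasi-geodesics is a legitimate variant of the paper's hyperplane-nesting criterion (Lemmas~\ref{qg condition} and \ref{qgonopensets}); the invariance argument via the McCullough--Miller realization of automorphisms by homeomorphisms (up to the $f_i$'s) is a workable alternative to the paper's purely algebraic characterization of separability through free factors and the Kurosh theorem; and the containment of $\mathrm{int}(AH(M))$ via Sullivan and convex cocompactness is exactly the paper's argument.

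The genuine gap is in the proper discontinuity step, and you have correctly located it yourself: the ``distinctness forces growth'' claim. Your proposed route --- leveraging that $\out(G)$ is ``not too far from'' $\Mod(M)$ acting on the Masur domain --- is not developed and is not how the argument closes. What is actually needed, and what the paper supplies, is a \emph{finite} collection $\mathcal{W}$ of separable elements with the property that only finitely many classes in $\out(G)$ can distort the Cayley-graph translation length of every element of $\mathcal{W}$ by a bounded multiplicative factor. The paper obtains this in two steps: first, it constructs a generating set $X$ such that every generator \emph{and every two-fold product of generators} is separable (this requires a case analysis --- large versus small compression bodies, and in the case of a trivial $I$-bundle with a handle one must realize an explicit automorphism by a homeomorphism to see that $x_{i_1}x_{i_2}$ misses a disc); second, it invokes Minsky's finiteness lemma (\cite{min} Lemma 3.4), which says that for any $N$ the set of $[f]$ with $\|f(w)\|\le N\|w\|$ for all $w\in\mathcal{W}=\{x_i,\,x_ix_j\}$ is finite. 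Without exhibiting such a $\mathcal{W}$ consisting entirely of separable elements, the uniform two-sided comparison $r\le l_\rho(g)/\|g\|\le R$ over a compact set (which you do have) gives you no purchase: the inequality $\|f^{-1}(w)\|\le (R/r)\|w\|$ only constrains $f$ on separable elements, and one must know that bounded distortion on finitely many of them pins down $[f]$ up to finitely many choices. This is the missing ingredient, and it cannot be replaced by a soft appeal to Masur--Otal proper discontinuity on the Masur domain, since an outer automorphism need not act on $PML(\partial_{ext}M)$ at all without first being realized geometrically.
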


\begin{proof}
We start by showing that the set of separable-stable representations is open.  Openness follows immediately from the following two lemmas (for proofs see \cite{thesis} Lemmas II.10 and III.5).  The first is a characterization of quasi-geodesics in terms of a nesting condition.
\begin{lemma} [Minsky] \label{qg condition}
Let $G$ be a finitely-generated hyperbolic group acting by isometries on $\H^3$.  Given $(K, A)$ there exists $c > 0$ and $i \in \mathbb{N}$ such that if $L'=\tau(L)$ is a $(K, A)$-quasi-geodesic, then $P_{j,i}$ separates $P_{(j+1),i}$ and $P_{(j-1),i}$ and $d(P_{j,i}, P_{(j+1),i})>c$. Conversely, given $c>0$ and $i \in \mathbb{N}$ there exists $(K', A')$ such that if $L'=\tau(L)$ has the property that $P_{j,i}$ separates $P_{(j+1),i}$ and $P_{(j-1),i}$ and $d(P_{j,i}, P_{(j+1),i})>c$ then $L'$ is a $(K', A')$-quasi-geodesic.
\end{lemma}

Then, openness of $\mathcal{SS}(M)$ follows from the following lemma.
\begin{lemma} \label{qgonopensets}
For any separable-stable representation $[\rho_0]$ in $\mathcal{X}(M)$, there exists a neighborhood $U_{[\rho_0]}$ of $[\rho_0]$ and constants $c'>0$, $i' \in \mathbb{N}$ such that for any $[\sigma]$ in $U_{[\rho_0]}$ and any geodesic $l$ in $\mathcal{S}_S$, the hyperplanes, $P_{j,i'}$ corresponding to $\tau_{\sigma, x}(l)$ have the property that $P_{j,i'}$ separates $P_{(j+1),i'}$ and $P_{(j-1),i'}$ and $d(P_{j,i'}, P_{(j+1),i'})>c'$.
\end{lemma}

To see that $\mathcal{SS}(M)$ is $\out(\pi_1(M))$-invariant it suffices to show that any automorphism $f$ of $\pi_1(M)$ preserves the set of separable elements.  Then, the isometry from $C_{f(S)}(G) \rightarrow C_S(G)$ that is the identity map on vertices will send the elements of $\mathcal{S}_{f(S)}$ to $\mathcal{S}_S$.  Since the image of $\tau_{\rho \circ f^{-1}, x}: C_{f(S)}(G) \rightarrow \H^3$ coincides with the image of $\tau_{\rho,x}: C_S(G) \rightarrow \H^3$, if $\rho$ sends elements in $\mathcal{S}_S$ to $(K, A)$-quasi-geodesics, then $\tau_{\rho \circ f^{-1}, x}$ sends elements in $\mathcal{S}_{f(S)}$ to $(K,A)$-quasi-geodesics.  Since separable-stability is independent of the choice of generators of $G$, we have that $\rho \circ f^{-1}$ will also be separable-stable.

If $M$ is a compression body that is not the connect sum of two trivial $I$-bundles, then an element $g$ is separable if and only if $g$ lies in a proper factor of a decomposition of $M$ into a free product.  Indeed, if $g$ corresponds to a curve that is freely homotopic to a curve missing an essential separating disc corresponding to the splitting $H*K$, then $g$ lies in either $H$ or $K$, up conjugation.  If $g$ corresponds to a curve that is freely homotopic to a curve missing an essential nonseparating disc, then there is a splitting $H*_{\{1\}}$ such that $g$ lies in $H$, up to conjugation.  Since $H*_{\{1\}} \cong H*\mathbb{Z}$, we have that $g$ lies in a proper factor of a free decomposition.  The converse follows from Lemma \ref{mer}.  This implies that separability is preserved under automorphisms.

For the remaining case, suppose that $M$ is the connect sum of $S_1 \times I$ and $S_2 \times I$ where $S_i$ is a closed surface of genus at least two.  First, we want to see that $g$ is separable if and only if there is a decomposition of $\pi_1(M)$ as $\pi_1(M) \cong (H*_{<c>} K) * L$ or $\pi_1(M) \cong (K*_{<c>}) * L$ such that $g$ lies in $K*L$ satisfying the following
\begin{enumalph}
\item $L \cong \pi_1(S_i)$
\item $H*_{<c>} K \cong \pi_1(S_j)$ in the first type of decomposition or $(K*_{<c>}) \cong \pi_1(S_j)$ in the second type of decomposition where $i \neq j$
\item $c$ is freely homotopic to a simple closed curve on $S_j$
\end{enumalph}
If $g$ is separable, then it misses an essential annulus $A$ in $S_j \times I$.  If $c$ is the core curve of $A$, then $\pi_1(S_j)$ decomposes as $H*_{<c>}K$ or $K*_{<c>}$.  Then $\pi_1(M)$ decomposes as $\pi_1(M) = H*_{<c>}K * \pi_1(S_i)$ or $\pi_1(M) = \pi_1(S_i) * K*_{<c>}$ such that $g$ lies in $K * \pi_1(S_i)$.  Conversely, if $g$ is an element in $\pi_1(M)$ that lies in $K*L$ for a decomposition of one of the above types, then $g$ misses the essential annulus $c \times I$.

Now, it suffices to show that such a decomposition is preserved under automorphisms.  If $\phi$ is an automorphism of $\pi_1(M)$, then $\phi((H*_{<c>} K) * L)=(\phi(H)*_{<\phi(c)>} \phi(K)) * \phi(L)$.  By the Kurosh subgroup theorem (\cite{kur}), $\phi(H)*_{<\phi(c)>} \phi(K)$ is conjugate to $\pi_1(S_j)$ and $\phi(L)$ is conjugate to $\pi_1(S_i)$.  Up to composition with an inner automorphism and potentially switching the factors, $\phi (\pi_1(S_j))=\pi_1(S_j)$ and $\phi(L)=\pi_1(S_i)$ (\cite{can-mcc} Lemma 9.1.2).  Since homotopy equivalence of closed surfaces are homotopic to homeomorphisms, $\phi(c)$ is a simple closed curve on $S_j$.

That $\mathcal{SS}(M)$ contains the interior of $AH(M),$ follows from a result of Sullivan (\cite{sul}) that the interior of $AH(M)$ consists of convex cocompact representations and hence have orbit maps that are quasi-isometric embeddings.

It remains to show that the action of $\out(\pi_1(M))$ on $\mathcal{SS}(M)$ is properly discontinuous.  The idea is that separable-stability will imply that translation length of a separable in the Cayley graph is coarsely the same as translation length of the corresponding isometry in $\H^3$.  To show proper discontinuity of the action it will suffice to show that only finitely many automorphisms, up to conjugation, can change the translation length of all separable elements in the Cayley graph by a bounded amount.

Let $l_\rho(g)$ denote the translation length of $\rho(g)$ in $\H^3,$ and let $ || g ||$ denote the minimum translation length of $g$ in $C_S(G)$.  The following lemma (see \cite{thesis} Lemma III.7) states that for each compact set in $\mathcal{SS}(M)$, the translation length of a separable element in $\H^3$ and in $C_S(G)$ is coarsely the same.  For each $(K,A)$-separable-stable representation, since the orbit of a separable element is a $(K, A)$-quasi-geodesic the translation length in $\H^3$ and in $C_S(G)$ are coarsely the same.  By Lemma \ref{qgonopensets}, in a compact subset of $\mathcal{SS}(M)$ one can choose uniform constants $(K,A)$.

\begin{lemma}Let $C$ be a compact subset of $\mathcal{SS}(M)$.  There exists $r, R>0$ such that
$$
r \leq \frac{l_\rho(g)}{||g||}\leq R
$$
for all $g$ separable and all representations $[\rho]$ in $C$.
\end{lemma}

Now suppose that $[f]$ is an element in $\out(G)$ such that $f(C) \cap C \neq \emptyset$.  Applying the above inequalities we have that for any $[\rho] $ in $C$ and any $w$ separable,

$$
||f^{-1}(w)|| \leq \frac{1}{r} l_\rho(f^{-1}(w))= \frac{1}{r} l_{\rho \circ f^{-1}}(w) \leq  \frac{R}{r}||w||.
$$

We will need that there are a sufficient number of separable elements.  In particular, there is a generating set of $\pi_1(M)$ such that each generator and any two fold product of generators is separable.  Indeed if $M$ is a large compression body, take a maximal decomposition of $G$ into a free product, $G=G_1 * \cdots * G_n$.  Let $X$ be the union of finite generating sets for each factor.  Since $n$ is at least three, any two fold product of distinct generators is separable.  If $M$ is a small compression body, let $X$ be the union of the standard generators of each closed surface group factor and a generator for the infinite cyclic factor if there is a handle.  In the case that $M$ is a connect sum of two trivial $I$-bundles over closed surfaces, it is clear that any two fold product of such generators is separable.  In the case that $M$ is a trivial $I$-bundle over a closed surface with a one handle, we only need to concern ourselves with products of the form $x_{i_1}x_{i_2}$ where $x_{i_1}$ is part of the generating set for the closed surface group and $x_{i_2}$ is the generator for the infinite cyclic factor.  Then, $x_{i_2}$ misses $D$ an essential disc.  Let $f$ be an automorphism of $G$ that is the identity on the surface group factor and maps $x_{i_2}$ to $x_{i_1}x_{i_2}$.  By the discussion in the proof of Lemma \ref{mer}, $f$ is realizable by a homeomorphism $f'$.  Hence $x_{i_1}x_{i_2}$ misses the essential disc $f'(D)$.
Let $\mathcal{W}=\{x_i, x_ix_j| x_i, x_j \in X\}$.  Then, by the following lemma (see \cite{min} Lemma 3.4 or \cite{can-survey} Proposition 2.3), the action is properly discontinuous.

\begin{lemma}
For any $N > 0$, the set $$\mathcal{A}=\{[f] \in \out(G) | \text{    } ||f(w)|| \leq N ||w|| | \text{ for all $w \in \mathcal{W}$} \}$$ is finite.
\end{lemma}

\end{proof}

\section{Separable-stable points on $\partial AH(M)$} \label{boundarypoints}
The goal of this section is to prove the existence of separable-stable points on $\partial AH(M)$ (Proposition \ref{pspoints}).  Together with Proposition \ref{compdisc} these examples will complete the proof of Theorem \ref{compbody}.  These points will correspond to pinching either a Masur domain curve or a Masur domain lamination on the exterior boundary of $M.$  Using Lemma \ref{compact}, it suffices to show that all separable geodesics lie in a compact set.  Roughly speaking, if this were not the case, then one could find a sequence of fixed points of separable elements in $\partial C_S(G)$ approaching an endpoint of an end invariant.  We use the Whitehead graph to form a dichotomy between separable elements and Masur domain laminations to show that such a situation is impossible.

\subsection{The Whitehead graph for a compression body}\label{whitehead}
In this section we define the Whitehead graph for a closed geodesic or Masur domain lamination with respect to a fixed system of meridians $\alpha$ on $M$.  The generalization of Whitehead graphs to compression bodies was developed in Otal's Th\`ese d'Etat (\cite{ota}).

\subsubsection{The handlebody case}
We will start by describing Whitehead's original construction in the case when $M$ is a handlebody (\cite{whi1}, \cite{whi2}, see \cite{sta}).  As this is discussed in detail in \cite{min}, we will sketch this case and discuss the case of compression bodies that are not handlebodies in detail.  For a fixed free symmetric generating set $X= \{x_1, \dots, x_n, x_1^{-1}, \ldots, x_n^{-1}\}$ of $\pi_1(M)$ and a word $w=w_1\cdots w_k$ in $\pi_1(M)$, the Whitehead graph of $w$ with respect to $X$ is the graph with $2n$ vertices $x_1, x^{-1}, \ldots, x_n, x_n^{-1}$ and an edge from $x$ to $y^{-1}$ for each string $xy$ in $w$ or any cyclic permutation of $w$.  In this situation Whitehead (\cite{whi1}) proved the following.

\begin{lemma} (Whitehead)
Let $g$ be a cyclically reduced word.  If the Whitehead graph of $g$ with respect to $X$ is connected and has no cutpoint, then $g$ is not primitive.
\end{lemma}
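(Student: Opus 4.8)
The plan is to prove the contrapositive: if $g$ is primitive, then the Whitehead graph $Wh(g)$ with respect to the fixed basis $X$ (here $X$ has $2n$ elements with $n\ge 2$, which is the case of interest for handlebodies of genus at least two) is either disconnected or has a cut vertex. The two classical ingredients are Whitehead automorphisms together with Whitehead's peak-reduction lemma (\cite{whi1}, \cite{whi2}; see also \cite{sta}), and the formula for how cyclic word length changes under a Whitehead automorphism.

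First I would recall these ingredients. The group $\textup{Aut}(F_n)$ is generated by the (signed) permutation automorphisms of $X$, which preserve cyclic length, together with the \emph{Whitehead automorphisms of the second kind} $(A,a)$, where $a\in X$ and $A\subseteq X$ with $a\in A$ and $a^{-1}\notin A$. The peak-reduction lemma states that if a cyclically reduced $w$ is \emph{not} of minimal cyclic length in its $\textup{Aut}(F_n)$-orbit, then some $(A,a)$ satisfies $|(A,a)\cdot w|_{\mathrm{cyc}}<|w|_{\mathrm{cyc}}$. The length-change formula states that for cyclically reduced $w$ one has $|(A,a)\cdot w|_{\mathrm{cyc}}-|w|_{\mathrm{cyc}}=e_{Wh(w)}(A)-\deg_{Wh(w)}(a)$, where $e_{Wh(w)}(A)$ is the number of edges of $Wh(w)$ with exactly one endpoint in $A$. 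From this I would extract the standard consequence: if $(A,a)$ strictly shortens $w$, then $Wh(w)$ is disconnected or one of $a$, $a^{-1}$ is a cut vertex of $Wh(w)$. This is where a short case analysis is needed, comparing the partition $A=\{a\}\sqcup(A\setminus\{a\})$ and its complement against the sign of $e_{Wh(w)}(A)-\deg_{Wh(w)}(a)$.

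Granting these, the argument is short. Suppose $g$ is primitive and cyclically reduced, of cyclic length $\ell\ge 1$. Since $g$ is part of a free basis, $g=\phi(x_1)$ for some $\phi\in\textup{Aut}(F_n)$, so $g$ lies in the $\textup{Aut}(F_n)$-orbit of $x_1$; as inner automorphisms preserve cyclic length, the minimal cyclic length in this orbit is $1$. If $\ell\ge 2$, then $g$ is not of minimal cyclic length, so by peak reduction some $(A,a)$ strictly shortens $g$, whence $Wh(g)$ is disconnected or has a cut vertex. If $\ell=1$, then $g$ is a single generator $x_i^{\pm1}$, and directly from the definition $Wh(g)$ consists of the single edge joining $x_i$ to $x_i^{-1}$ together with $2n-2\ge 2$ isolated vertices, hence is disconnected. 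In either case $Wh(g)$ fails to be connected with no cut point, which is exactly the contrapositive of the claim.

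The main obstacle is the length-change bookkeeping underlying the consequence invoked above: one substitutes $(A,a)$ into the cyclic word $g$, tracks the appended $a$'s and prepended $a^{-1}$'s junction by junction, accounts for the resulting cancellations, and reads off that a net decrease in cyclic length can occur only when the partition of $Wh(g)$ induced by $A$ is pinched at the vertex $a$ (equivalently $a^{-1}$). This is precisely Whitehead's original argument, so in practice I would cite it from \cite{sta} (or \cite{whi1}) rather than reproduce it; the remaining steps — the reduction to the contrapositive, the passage from primitivity to an orbit of $x_1$, and the computation of the Whitehead graph of a single generator — are formal.
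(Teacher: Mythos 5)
The paper does not actually prove this lemma; it is quoted verbatim from Whitehead with a citation to \cite{whi1} (and \cite{sta}), so there is no internal argument to compare yours against, and a citation-level proof is all that is expected here. Your skeleton is the standard peak-reduction route and is correct in outline: the reduction to the contrapositive, the observation that a primitive element lies in the $\mathrm{Aut}(F_n)$-orbit of $x_1$ and hence has minimal cyclic length $1$, the invocation of peak reduction to produce a length-reducing Whitehead automorphism $(A,a)$ when $|g|\geq 2$, and the direct computation for $|g|=1$ (where, for $n\geq 2$, the graph is disconnected) are all fine.

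The one place I would push back is your description of the crux as ``a short case analysis'' on the sign of $e_{Wh(g)}(A)-\deg_{Wh(g)}(a)$. Writing $P=A\setminus\{a\}$ and $Q=X\setminus(A\cup\{a^{-1}\})$, the inequality $e(A)<\deg(a)$ unpacks (since cyclic reducedness rules out loops) to
$$e(P,Q)+e(P,\{a^{-1}\})<e(\{a\},P),$$
which only \emph{bounds} the number of edges joining $P$ to $Q$; it does not make them vanish, so it does not by itself exhibit $a$ or $a^{-1}$ as a vertex pinching the graph. The implication ``some $(A,a)$ strictly shortens $g$ $\Rightarrow$ $Wh(g)$ is disconnected or has a cut vertex'' is the actual content of the cut-vertex lemma, and every complete proof does more work at exactly this point (Whitehead's original sphere-system argument, a careful combinatorial argument involving a well-chosen minimizing $(A,a)$, or the modern proof via Stallings folds). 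Since you explicitly say you would cite this step from \cite{whi1} or \cite{sta} rather than reproduce it --- which is no less than the paper itself does for the entire lemma --- the proposal is acceptable as written, but the sign comparison alone is not a proof of that step, and the write-up should not suggest that it is.
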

A word $w_1 \cdots w_k$ is cyclically reduced if it is reduced and satisfies $w_1 \neq w_k^{-1}$.  A cutpoint
is a vertex whose complement is disconnected.  A primitive element in a free group is an element that lies in a free generating set for the group.  In particular, a primitive element is separable.

Otal extended Whitehead's condition to laminations on the boundary of the handlebody as follows.  If $X$ is a free generating set for $\pi_1(M)$, then it is dual to a system of properly embedded essential discs on $M$ whose complement is a 3-ball.  If $D=\{D_1, \ldots, D_n\}$ is such a system of disks, then Otal calls a lamination $\lambda$ in \emph{tight position} with respect to $D$ if there are no waves on $D$ disjoint from $\lambda$.  A \emph{wave} is an arc $k$ properly embedded in $\partial M - \partial D$ such that $k$ is homotopic in $M$ but not in $\partial M$ into $\partial D$.  Cutting $\partial M$ along $D$, produces a planar surface with $2n$ boundary components $D_1^+, D_1^-, D_2^+, D_2^-, \ldots, D_n^+, D_n^-$.  The vertices of the Whitehead graph are in one-to-one correspondence with these boundary components.  There is an edge between two vertices if there is an arc of $\lambda - \partial D$ connecting the corresponding boundary components.  Otal proved that for laminations in the Masur domain of $M$ that are in tight position with respect to $D$, the Whitehead graph is connected and has no cutpoints (\cite{ota} Proposition 3.10, see \cite{min} Lemma 4.5).  Moreover, if $\lambda$ is a Masur domain lamination, then there always exists a system of discs $D$ such that $\lambda$ is in tight position with respect to $D$.

\subsubsection{Compression bodies that are not handlebodies}
Here we will discuss the Whitehead graph of compression bodies that are not handlebodies.
Fix $\sigma: G
\rightarrow \pslc$ a convex cocompact representation of
$G$ such that $N_\sigma=\H^3/\sigma(G)$ is homeomorphic to the interior of $M$.

\begin{definition} A \emph{system of meridians} is a collection
$\alpha=\{\alpha_1, \ldots, \alpha_n\}$ of disjoint, pairwise nonisotopic, simple
closed curves on $\partial_{ext}M$ that bound discs $D=\{D_1, \ldots, D_n\}$ in $M$ such that
$M-\mathcal{N}(D)$ consists of a collection of trivial $I$-bundles over closed surfaces, where $\mathcal{N}(D)$ is a regular neighborhood of $D.$
\end{definition}

Since $\sigma$ is convex cocompact, $\overline{N_\sigma}= N_\sigma \cup \partial_C N_\sigma$ is homeomorphic to $M$.  We will often identify $\partial M$ with $\partial_C N_\sigma$.  Let $\alpha$ be a system of meridians bounding the discs
$D=D_1 \cup \cdots \cup D_n$.  Let $\Sigma_1 \times I, \ldots, \Sigma_k \times I$ be the components of $\overline{N_\sigma}-\mathcal{N}(D).$  Let $\mu$ be a subset of $\Lambda(\sigma(G)) \times \Lambda(\sigma(G))$ that is $\sigma(G)$-invariant and also invariant under switching the two factors.  Most of the time $\mu$ will be one of the two following sets:
\begin{itemize}
\item If $\gamma$ is a closed geodesic in $N_\sigma$, let $\mu_\gamma$ be all pairs of endpoints of the lifts of $\gamma$.
\item If $\lambda$ is a Masur domain lamination, then it is realizable by a pleated surface $h: S \rightarrow N_\sigma$ homotopic to the inclusion map.  Let $\mu_\lambda$ be all pairs of endpoints of lifts of leaves in $h(\lambda).$
\end{itemize}

Recall from Section \ref{psurf} that $h(\lambda)$ is independent of the choice of pleated surface.  $\Gamma_\alpha(\mu)$, the Whitehead graph of $\mu$ with respect to $\alpha$, is a collection of not necessarily connected graphs, $\Gamma_\alpha(\mu)^{\Sigma_1}, \ldots, \Gamma_\alpha(\mu)^{\Sigma_k},$ where the elements in the collection are in one-to-one
correspondence with the components of $\overline{N_\sigma}-D$.  In $\overline{N_\sigma}-\mathcal{N}(D)$, there are two copies $D_i^+$ and $D_i^-$ of each $D_i$ in $D$.  Given a component $\Sigma \times I$
of $\overline{N_\sigma}-\mathcal{N}(D)$, the vertices in the corresponding graph $\Gamma_\alpha(\mu)^\Sigma$
are in one-to-one correspondence with the components of $D_i^+$ and/or $D_i^-$ of $D$ in the frontier
of $\Sigma \times I$.  Fix a Jordan curve $C$ in $\Lambda(\sigma(G))$ that is invariant under a conjugate
of $\pi_1(\Sigma)$, which we will continue to denote $\pi_1(\Sigma)$.  To avoid superscripts, we will abuse notation and let $D_1,
\ldots, D_m$ denote the vertices of this component.  Let $F$ denote the boundary component of
$\Sigma \times I$ coming from $\partial_{ext}M$.  Fix a lift $\widetilde{\partial D_i}$ of each $\partial D_i$ in $\partial \H^3$ such that
$\widetilde{\partial D_i}$ lies in the component of the preimage of $F$
containing $C$ on its boundary.  Let $U_i$ be the open set in $\partial \H^3$, bounded by $\widetilde{\partial D_i}$ not containing $C$.  The edges from $D_i$ to $D_j$ will be in
one-to-one correspondence with elements $g$ in $\pi_1(\Sigma)$ such that $\mu \cap (U_i \times g U_j)$ is nonempty.  We will denote such an edge $(U_i, g U_j)$.  Notice
that although these edges are directed, for each edge from $U_i$ to $U_j$ labeled
$g$, there is an edge from $U_j$ to $U_i$ labeled $g^{-1}$.

\begin{definition} [Otal] A connected component of $\Gamma_\alpha(\mu)^\Sigma$ is \emph{strongly connected} if there exists a cycle that represents a nontrivial element of
$\pi_1(\Sigma)$.  A connected component of $\Gamma_\alpha(\mu)^\Sigma$ has a \emph{strong cutpoint} if we can express the graph as the
union of two graphs $G_1$ and $G_2$ that intersect in a single vertex such that
either $G_1$ or $G_2$ is not strongly connected. \end{definition}

We take the convention that a cycle $$(U_{i_1}, g_1U_{i_2}), (U_{i_2},g_2U_{i_3}),
\ldots, (U_{i_k},g_k U_{i_1})$$ corresponds to the group element $g_1\cdots g_k$.  We
made two choices when defining the Whitehead graph, the lifts $U_i$ of $D_i$ and
the Jordan curve $C$.  Suppose that we pick a different set of lifts $U_1',
\ldots, U_m'$ of $D_1, \ldots, D_m$.  Then $U_i'=h_iU_i$ for some $h_i$ in
$\pi_1(\Sigma)$.  There is an edge $(U_{i_1}, gU_{i_2})$ in the original graph if and only if there is an edge $(U_{i_1}', h_{i_1}gh_{i_2}^{-1}U_{i_2}')$ in the new graph.  In particular, there is a cycle $(U_{i_1}, g_1U_{i_2}), \ldots, (U_{i_k},g_k
U_{i_1})$ in the original graph if and only if there is a cycle
$$(U_{i_1}',
h_{i_1}g_1h_{i_2}^{-1}U_{i_2}'), \ldots, (U_{i_k}', h_{i_k}g_k
h_{i_1}^{-1}U_{i_1}')$$ in the new graph.  Since $g_1 \cdots g_k$ is
nontrivial if and only if $h_{i_1}g_1\cdots g_k h_{i_1}^{-1}$ is nontrivial, the
above definitions do not depend on the choice of lifts $U_i$.

Suppose we choose a different Jordan curve $C'$.  Then there exists an element $a$
in $G$ such that $C'=aC$.  The lifts $aU_i$ of $D_i$ will lie in the appropriate
component of the preimage of $F$ in $\partial H^3$, namely the one containing $C'$
on its boundary.  Since $\mu$ is $\sigma(G)$-invariant there is an edge between $U_i $ and $gU_j$ if and only if there is an edge between $aU_i$ and $aga^{-1}aU_j$; in particular, any edge
labeled $g$ in the original graph is now an edge in the new graph labeled
$aga^{-1}$.  So the above definitions do not depend on the choice of Jordan curve
$C$.

\subsubsection{Topological Interpretation} \label{topinterpret}
In the case when $\mu=\mu_\lambda$ for a Masur domain lamination $\lambda$, Otal describes a topological interpretation of the Whitehead graph in terms of the exterior boundary.  As in the handlebody case, there is a notion of tight position.
\begin{definition} A measured lamination $\lambda$ on $\partial_{ext}M$ is in \emph{tight position
relative to $\alpha$} a system of meridians if there does not exist a wave $k$ disjoint from $\lambda$ properly embedded in
$S-\alpha$, where a \emph{wave} is an arc satisfying the following.
 \begin{itemize}
 \item the interior of $k$ is disjoint from $\alpha$.
 \item $k$ can be homotoped in $N_\sigma$ but not in $\partial M$ relative to its endpoints to an arc contained in some $\alpha_i$.
 \end{itemize}
\end{definition}

Observe that if $\lambda$ is in tight position, then there are no waves in $\lambda$.  If $\lambda$ is a Masur domain lamination then there exists a system of meridians
$\alpha$ such that $\lambda$ is in tight position with respect to $\alpha$ (\cite{mas}, Section 3 for handlebodies, \cite{ota} Theorem 1.3 for general compression bodies).
Such a system is obtained by minimizing the intersection number with $\lambda$.
Assume
that $\lambda$ is in tight position with respect to $\alpha$.  We will start by defining a related collection of graphs denoted $\Gamma_\alpha'(\lambda)$.  Consider
the collection of surfaces with boundary obtained by cutting $\partial_{ext}M$ along $\alpha$.
For each $\alpha_i$ in $\alpha$, there are two boundary components $\alpha_i^+$
and $\alpha_i^-$ in the new collection of surfaces with boundary. For each
component $F$ of $S-\alpha$, we will define a graph of $\Gamma'_\alpha(\lambda)^F$ as follows.
The vertices will be in one to one correspondence with the copies of $\alpha_i^+$ and/or $\alpha_i^-$ in
the frontier of $F$.  We will abuse notation and relabel the boundary components $\alpha_i$ to avoid superscripts.  The edges from the vertex $\alpha_i$ to the vertex $\alpha_j$ are
in one-to-one correspondence with the isotopy classes of arcs on $\partial_{ext}M$ in $\lambda$
connecting $\alpha_i$ and $\alpha_j$.

There is a natural surjective map from $\Gamma'_\alpha(\lambda) \rightarrow
\Gamma_\alpha(\mu_\lambda)$ defined as follows.  Take the obvious map on the
vertices.  Suppose that $[k]$ is an edge connecting $\alpha_i$ and $\alpha_j$.
Let $D_i$ and $D_j$ denote the corresponding vertices in
$\Gamma_\alpha(\mu_\lambda)$ and let $U_i$ and $U_j$ be the fixed lifts of $D_i$ and $D_j$, respectively.  Take the lift $\widetilde k$ of $k$ intersecting $U_i$.  Since $\lambda$ is in tight position with respect to $\alpha$, we have that $\widetilde k$ will have one endpoint in $U_i$ and the other in $gU_j$ for some $g$ in $\pi_1(\Sigma)$.  Map the edge
$[k]$ in $\Gamma'_\alpha (\lambda)$ to the edge $(U_i, gU_j)$ in $\Gamma_\alpha
(\mu_\lambda)$.  To see that $(U_i, gU_j)$ is an edge in
$\Gamma_\alpha(\lambda)$ we will need the following two facts.

\begin{itemize}
\item Any lift of a leaf $l$ of $\lambda$ has two well-defined endpoints $x_1$ and $x_2$ in $\Lambda(\sigma(G)).$
\item $(x_1,x_2)$ are the endpoints of $h(l),$ where $h: S \rightarrow N_\sigma$ is a pleated surface realizing $\lambda.$
\end{itemize}

To see a proof of the first fact see Lemma 1 in \cite{kle-sou}.  The second fact is clear since it is true for simple closed curves in the Masur domain and we can approximate $\lambda$ by such curves.

Now, if we consider the leaf $\widetilde l$ of $\widetilde \lambda$ containing the arc $\widetilde k$ its endpoints must be contained in $U_i$ and $gU_j$ by tightness.

To see that the map is surjective, given an edge $(U_i, gU_j)$ in
$\Gamma_\alpha(\mu_\lambda)$, there exists a leaf $\widetilde l$ with endpoints in $U_i$
and $gU_j$.  This will give an arc between $\alpha_i$ and $\alpha_j$.   Two edges
$[k]$ and $[k']$ in $\Gamma'_\alpha(\lambda)$ are identified in $\Gamma_\alpha(\mu_\lambda)$ if and only if they are homotopic in
$N_\sigma$ (see Figure \ref{topinterpic}). Hence edges in the Whitehead graph between $D_i$ and $D_j$ correspond to
homotopy classes of arcs of $\lambda$ joining $\partial D_i$ and $\partial D_j$.

\begin{figure}
\begin{center}
 \includegraphics[width=60mm]{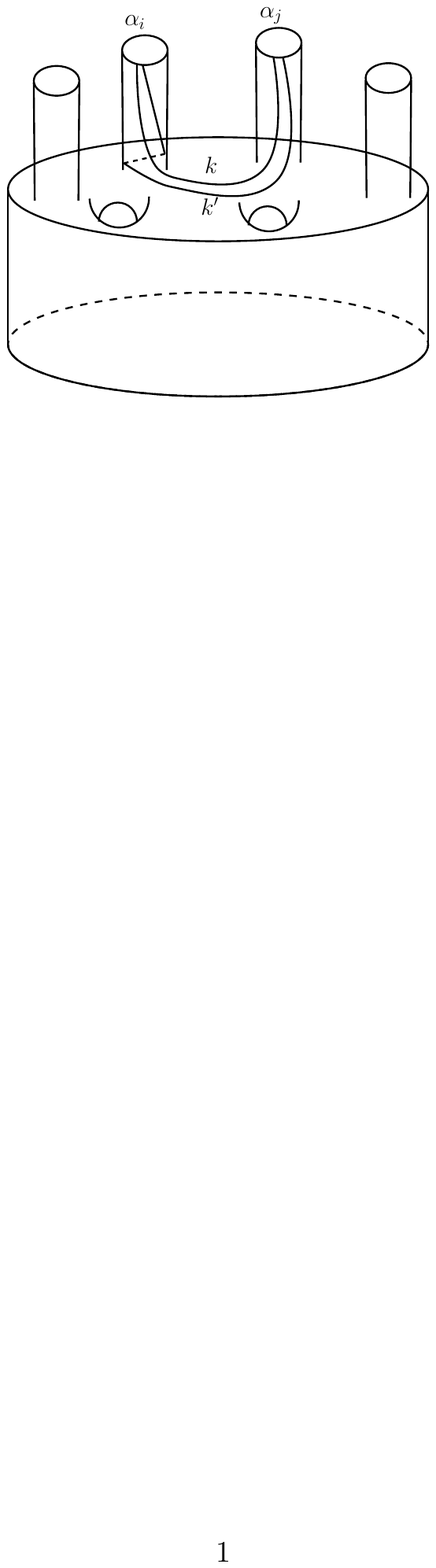}
\caption[Edges that get identified]{The edges $[k]$ and $[k']$ in $\Gamma'_\alpha(\mu)$ get identified in $\Gamma_\alpha(\mu)$.}
\label{topinterpic}
\end{center}
\end{figure}

\subsubsection{Whitehead graphs of separable curves and Masur domain laminations}
In this section we give Otal's generalization of Whitehead's lemma, namely that for a separable element $g$ there exists a connected component of $\Gamma_\alpha(\mu_g)$ that is either not strongly connected or has a strong cutpoint.  On the other hand, Otal also showed that the Whitehead graph of a Masur domain curve in tight position with respect to $\alpha$ is strongly connected and without a strong cutpoint.  We will use this dichotomy in Section \ref{examples}.

\begin{proposition} [Otal, Proposition A.3]  \label{primnsc} Let $M$ be a nontrivial compression body that is neither the connect sum of two trivial $I$-bundles over closed surfaces nor a handlebody.  If $g$ is a separable element of $G=\pi_1(M)$, then some connected component of $\Gamma_\alpha(\mu_g)$ is not strongly connected or has a
strong cutpoint.
\end{proposition}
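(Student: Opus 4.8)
The plan is to run Whitehead's classical argument in the form Otal adapts to compression bodies: realize $g$ by a loop missing an essential disc, put that disc in good position with respect to the fixed meridian system $D$, and extract the graph-theoretic conclusion from an analysis of its outermost arcs.

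\emph{Step 1 (good position).} Take a loop $\gamma$ in $M$ representing $g$ that is disjoint from an essential disc $E\subset M$, and also realize $g$ by the closed geodesic of $N_\sigma$, identifying $\overline{N_\sigma}$ with $M$. Make $E$ transverse to $D=D_1\cup\cdots\cup D_n$ while keeping $\gamma$ disjoint from $D\cup E$. Using irreducibility of $M$, an innermost-disc argument removes all circle components of $E\cap D$; innermost inessential arcs of $E\cap D$ are then eliminated by isotopies of $E$ supported near $\partial_{ext}M$, which are automatically disjoint from $\gamma$, so one may assume $|E\cap D|$ is minimal. If $E\cap D=\emptyset$ then $E$ lies in a single complementary piece $\Sigma_j\times I$, and a short direct argument using incompressibility of $\partial(\Sigma_j\times I)$ already exhibits an isolated vertex, hence a non-strongly-connected component, in one of the graphs; so one may assume $E\cap D$ is a nonempty family of arcs, cutting $E$ into subdiscs each contained in a single complementary piece.

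\emph{Step 2 (outermost arc is a wave).} Choose $\delta\subset E\cap D_i$ outermost in $E$, cutting off a subdisc $E_0$ with $E_0\cap D=\delta$ and $\partial E_0=\delta\cup\delta'$, where $\delta'\subset\partial_{ext}M$ has interior disjoint from $\alpha=\bigcup_l\alpha_l$ and endpoints on $\alpha_i$. Capping $\delta$ with a subdisc of $D_i$ shows that $\delta'$ joined to an arc of $\alpha_i$ bounds a disc in $M$; minimality of $|E\cap D|$ forbids this curve to be trivial in $\pi_1(\partial_{ext}M)$, so $\delta'$ is a wave relative to $\alpha$ based at the copy $D_i^{\pm}$ of $D_i$ in the frontier of the piece $\Sigma\times I$ containing $E_0$. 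Note that $E_0$, hence all its $\pi_1(\Sigma)$-translates and their lifts to $\H^3$, remains disjoint from $\gamma$.

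\emph{Step 3 (reading the cutpoint off the graph).} Work in $\Gamma_\alpha(\mu_g)^{\Sigma}$. The subdisc $E_0$ sits in $\Sigma\times I$ with one boundary arc on $D_i^{\pm}$ and the other the wave $\delta'$, and being outermost it cuts a ball off $\Sigma\times I$ whose only frontier disc is part of $D_i^{\pm}$; since $\gamma$ avoids $E_0$, the arcs of $\gamma\cap(\Sigma\times I)$ split into those trapped in that ball and the rest. Translating $E_0$ by $\pi_1(\Sigma)$ and lifting to $\H^3$, no lift of $\gamma$ has its two endpoints on opposite sides of the circle bounding the capped-off lift of $E_0$; hence the vertices of $\Gamma_\alpha(\mu_g)^{\Sigma}$ fall into two sets meeting only in $D_i^{\pm}$, with no edge of the graph joining them away from $D_i^{\pm}$, and every cycle on the ball side, being built from $\gamma$-arcs confined to a ball, carries trivial $\pi_1(\Sigma)$-label. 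This produces either a connected component of $\Gamma_\alpha(\mu_g)^{\Sigma}$ that is not strongly connected or a strong cutpoint at $D_i^{\pm}$, which is what the proposition asserts.

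The hard part will be Step 3: one must convert the topological fact that $E_0$ separates certain $\gamma$-arcs into a statement about the $\pi_1(\Sigma)$-\emph{labels} carried by edges and cycles of $\Gamma_\alpha(\mu_g)^{\Sigma}$ — precisely the data to which strong connectivity and strong cutpoints are sensitive — keeping careful track of the base lifts $U_i$ and the Jordan curve $C$ used in defining the graph, and handling uniformly the cases in which the two copies $D_i^{+},D_i^{-}$ lie in one piece or in two. As a sanity check one can also argue via Lemma \ref{mer}: $g$ lies in a proper free factor $P$ of $G$, so $g$ is either elliptic on the Bass--Serre tree of the meridian decomposition — then $\gamma$ is confined to a single lift of a single piece and every edge of every $\Gamma_\alpha(\mu_g)^{\Sigma_j}$ carries trivial label — or $g$ is hyperbolic there, and its confinement to $P$ leaves some meridian disc-side unvisited by $\gamma$, hence isolated; either way no component is strongly connected. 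Matching this second viewpoint with an arbitrary meridian system is where that route spends its effort, and is morally the same difficulty as Step 3.
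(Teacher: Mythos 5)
Your proposal follows essentially the same route as the paper's proof: Lemma~\ref{mer} produces an essential disc disjoint from the geodesic representative of $g$, the case of empty intersection with $D$ is treated separately, and an outermost subdisc cut off by an arc of $E\cap D$ lands in a single piece $\Sigma\times I$ and exhibits the corresponding vertex $D_i^{\pm}$ as a strong cutpoint, with no edges from the ``inside'' vertices to the ``outside'' ones and with every inside cycle carrying trivial $\pi_1(\Sigma)$-label because the stabilizer of the lifted subdisc is trivial. Two small corrections: you should only require $\gamma$ to be disjoint from $E$, not from $D\cup E$ --- a separable geodesic generally crosses the meridian system, and disjointness from $D$ would force $g$ into a single $I$-bundle factor; and both when $E\cap D=\emptyset$ and when the outermost subdisc is capped off, the resulting disc (or ball) may enclose several of the $D_j$ rather than exactly one, so you do not always get an isolated vertex or a ball meeting $D$ only in part of $D_i^{\pm}$ --- instead you get a nonempty collection of enclosed vertices whose cycles are all trivially labeled, by exactly the lifted-disc argument you sketch in Step~3. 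With those adjustments your outline matches the paper's argument, including its admission that the labels-versus-topology translation in Step~3 is where the work lies.
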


\begin{proof} Let $\gamma$ be the geodesic representative of $\sigma(g)$ in $N_\sigma$.  Since $M$ is not the connect sum of two trivial $I$-bundles, by Lemma \ref{mer}, there exists an essential disc $\Delta$ disjoint from $\gamma$.  As $\Delta$ is only well-defined up to
isotopy we will often abuse notation and use $\Delta$ to refer to different representatives of the isotopy class of $\Delta$.

First consider the case where $\Delta$ does not intersect $D$, up to isotopy.
Then $\Delta$ is isotopic (rel boundary) into the boundary of some component
$\Sigma \times I$ of $\overline{N_\sigma}- \mathcal{N}(D)$.  Consider the sets $\mathcal{A}=\{D_i | D_i
\subset \Delta\}$ and $\mathcal{A}^c= \{D_i | D_i
\not\in \mathcal{A}\}$.
If $\mathcal{A}$ is empty, then $\Delta$ is isotopic to some $D_i$.  This implies that $D_i$ is an isolated vertex in the Whitehead graph.  Moreover, any edge connecting $D_i$ to itself, is labeled with the trivial element as $\gamma$ does not intersect $D_i$.  In particular, the connected component containing $D_i$ is not strongly connected.  If $\mathcal{A}$ is nonempty, first observe that no vertex in $\mathcal{A}$ can be connected to a vertex in $\mathcal{A}^c$.  Let $C$ be any connected component of $\Gamma_\alpha(\mu_\gamma)^\Sigma$ containing a vertex in $\mathcal{A}$.  Then, $C$ is not strongly connected for if $(U_{i_1}, g_1 U_{i_2}), \ldots, (U_{i_k}, g_k U_{i_1})$ is a cycle in $C$ and if $\widetilde \Delta$ is the lift of $\Delta$ containing $U_{i_1}$, then $\widetilde \Delta$ also contains $g_1 \cdots g_kU_{i_1}$ since $\gamma$ does not intersect $\Delta$.  In particular, the cycle is trivial as the stabilizer of $\widetilde \Delta$ is trivial.

If $\Delta$ and $D$ intersect nontrivially, up to isotopy, $\Delta \cap D$
is a finite collection of disjoint properly embedded arcs.  Take $k_0$ an innermost arc in this collection, meaning that one of the discs $\Delta_0$ formed by $\partial \Delta$ and $k_0$ has interior disjoint from $D$.  Then, $\Delta_0$ lies in some component $\Sigma \times I$ of
$\overline{N_\sigma}-\mathcal{N}(D)$.  So $\partial \Delta_0$ intersects one of the $D_i$ in the frontier of $\Sigma
\times I$ nontrivially.  Let $D_0$ denote that disc.  Moreover, $\Delta_0$ is
isotopic relative to its boundary into the boundary of $\Sigma \times I$.  Let $C$ denote the connected component of $\Gamma_\alpha(\mu_\gamma)^\Sigma$ containing $D_0$.
Consider $\mathcal{B}$ the set of vertices in $C$ such that the corresponding discs $D_i$ in $\Sigma \times I$ are contained in $\Delta_0$.  Notice that if $\mathcal{B}$ is empty, then we can isotope $\Delta$ to remove $k_0$ from the intersection and repeat the procedure above.  Let $\mathcal{C}$
denote the set of vertices in $C$ such that the corresponding discs $D_i$ in $\Sigma \times I$ are disjoint from $\Delta_0$.  The only vertex not lying
in either set is $D_0$.  We claim that $D_0$ is a strong cutpoint of $C$ where the graph associated to the vertices in $\mathcal{B}$ is not strongly connected.  In particular, if $\mathcal{C}$ is empty, then $C$ is not strongly connected.

First, we want to show that no vertex in $\mathcal{B}$ is connected by an edge to a
vertex in $\mathcal{C}$.  Suppose that there is an edge $(U_b, gU_c)$ where $U_b$ is the fixed lift of a vertex in $\mathcal{B}$ and $U_c$ is the fixed lift of a vertex in $\mathcal{C},$ i.e., there is a lift $\widetilde \gamma$ of $\gamma$ such that one endpoint lies in $U_b$ and the other endpoint lies in $gU_c$.  This implies that $\gamma$ must intersect $\Delta_0$ nontrivially, which contradicts how we chose $\Delta_0$.

Secondly we want to show that the subgraph associated to $\mathcal{B}$ is not
strongly connected.  Suppose there is a cycle $(U_0, g_0U_1), \ldots, (U_k, g_kU_0)$ such that $g_0 \cdots g_k$ is nontrivial.  If $\widetilde \Delta_0$ is the lift of $\Delta_0$ containing $g_0U_1$, then $U_0$ and $g_1 \cdots g_kU_0$ intersect $\Delta_0$ (see Figure \ref{primnscpic2}).  This is impossible as it implies that either there is a nontrivial curve in $\Delta_0$ or $\Delta_0 \cap D_0$ consists of two connected components, contradicting how we chose $\Delta_0$.
\end{proof}

\begin{figure}
\begin{center}
 \includegraphics[width=60mm]{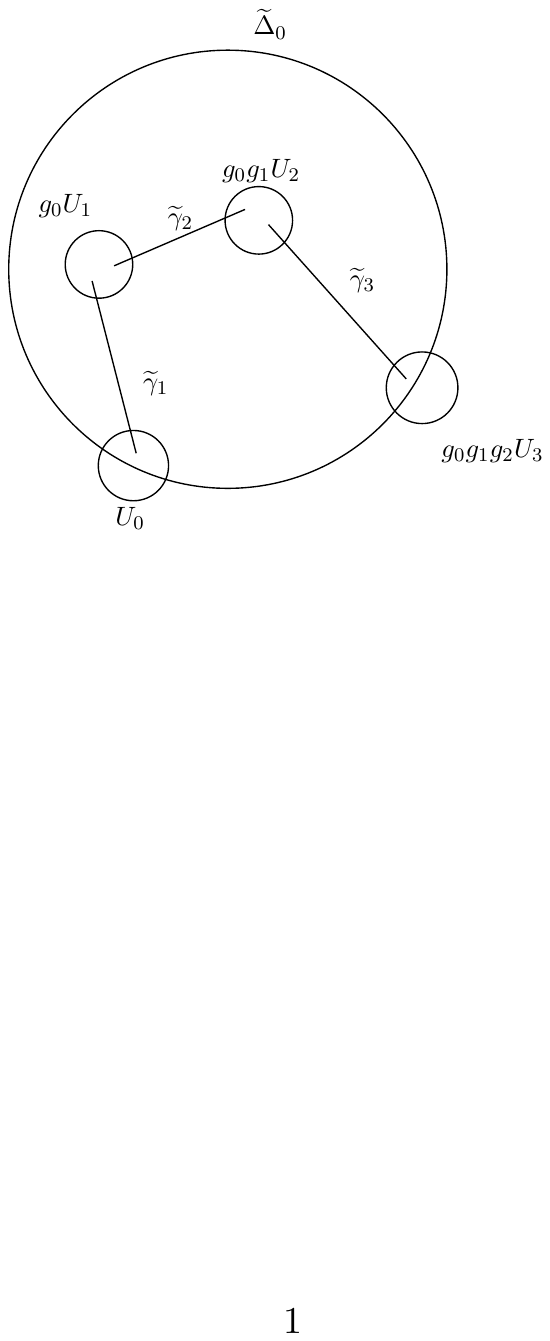}
\caption[No non-trivial cycles]{If there is a non-trivial cycle $(U_0, g_0U_1), \ldots, (U_k, g_kU_0)$, then $U_0$ and $g_0g_1g_2\cdots g_kU_0$ will both intersect $\widetilde \Delta_0$.}
\label{primnscpic2}
\end{center}
\end{figure}

\begin{proposition}[Otal, Propostion A.5] \label{masurdomain}
Let $\lambda$ be a measured lamination in the Masur domain
that is in tight position with respect to a system of meridians $\alpha$.  Then, each connected component of
$\Gamma_\alpha(\mu_\lambda)$ is strongly connected and without a strong cutpoint.
\end{proposition}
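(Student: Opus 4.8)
The plan is to prove the contrapositive-style dichotomy directly: assume $\lambda$ is in tight position with respect to $\alpha$ and show that a failure of strong connectedness or the presence of a strong cutpoint in some component $\Gamma_\alpha(\mu_\lambda)^\Sigma$ would force $\lambda$ out of the Masur domain. Recall the topological interpretation from Section~\ref{topinterpret}: the vertices of $\Gamma_\alpha(\mu_\lambda)^\Sigma$ correspond to the copies of the meridians $\alpha_i$ in the frontier of $\Sigma\times I$, and edges correspond to homotopy classes (rel $\alpha$) of arcs of $\lambda$ cut along $\alpha$ running between those meridians. So I would work entirely on the exterior boundary surface $\partial_{ext}M$ cut along $\alpha$, translating ``strongly connected'' and ``strong cutpoint'' into statements about how the arcs of $\lambda$ sit in the planar pieces $F$ of $S-\alpha$.

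First I would handle the failure-of-strong-connectedness case. Suppose some connected component $C$ of $\Gamma_\alpha(\mu_\lambda)^\Sigma$ is not strongly connected, i.e.\ every cycle in $C$ represents the trivial element of $\pi_1(\Sigma)$. Using the lifts $U_i$ and the fact (Section~\ref{topinterpret}) that an arc of $\lambda$ cut along $\alpha$ lifts to a segment from $U_i$ to $gU_j$ exactly when $(U_i,gU_j)$ is an edge, triviality of all cycles means: there is a collection of the meridian discs $D_i$ (those whose vertices lie in $C$) and a single lift of each into $\partial\H^3$ such that all leaves of $\widetilde\lambda$ meeting any of these $U_i$ stay inside the region bounded by $\bigcup U_i$. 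Pushing this down, the $D_i$ in $C$ can be surgered/banded together — along bands carried by the arcs of $\lambda$, which are embedded and disjoint from $\lambda$ — to produce an embedded essential disc (or a properly embedded disc isotopic into $\partial M$, giving a wave) disjoint from $\lambda$, contradicting either the Masur domain condition (since a Masur domain lamination meets every essential disc, indeed every essential annulus) or tight position. Concretely the construction mirrors the ``$\mathcal{A}$ nonempty'' case of the proof of Proposition~\ref{primnsc} run in reverse: no edge exits the vertex set of $C$, so the union of the corresponding $D_i$'s together with the subsurface of $F$ they co-bound is a disc one can isotope off $\lambda$. I would also note that if $C$ consists of a single vertex with only trivial self-loops, that vertex's meridian is already disjoint from $\lambda$, the same contradiction.

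Next, the strong-cutpoint case: write $C = G_1 \cup G_2$ meeting in a single vertex $v$ (corresponding to a meridian disc $D_0$), with $G_1$, say, not strongly connected. Then no arc of $\lambda$ joins a meridian represented in $G_1\setminus\{v\}$ to one in $G_2\setminus\{v\}$, and the $G_1$-part of $\lambda$ crosses $\partial D_0$ only from one side, carrying only trivial cycles. This is exactly the configuration that lets one band the $D_i$'s of $G_1$ across $\partial D_0$ to form a new essential disc $\Delta_0$ disjoint from $\lambda$ — the innermost-disc analysis in the proof of Proposition~\ref{primnsc} shows that such a $\Delta_0$, intersecting $D_0$ but nothing else of $D$, exists precisely when the Whitehead graph has a strong cutpoint of this non-strongly-connected type. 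Again this contradicts $\lambda\in\mathcal O(M)$. For the case where $M$ is the connect sum of two trivial $I$-bundles, I would run the same argument with essential annuli in place of essential discs, using the alternate definition of the Masur domain via $\mathcal{M}''$ and of separability via annuli, exactly as the definitions in Section~\ref{ssreps} anticipate.

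The main obstacle, I expect, is the band/surgery step: turning the purely combinatorial hypothesis ``all cycles trivial in $\pi_1(\Sigma)$'' into an honest embedded essential disc or wave disjoint from $\lambda$. One has to (i) check that the bands, which are regular neighborhoods of the relevant arcs of $\lambda$ in $F$, can be chosen disjoint from $\lambda$ and from each other — this uses that $\lambda$ is a lamination (its leaves, hence the arcs, are disjoint) and that we only use finitely many arc classes; (ii) verify that the resulting disc is essential (not boundary-parallel in the bad way) rather than inessential, which is where triviality of the cycles is used — an inessential outcome would correspond to a nontrivial cycle; and (iii) make sure the argument is insensitive to the choices of lifts $U_i$ and Jordan curve $C$, which is already handled by the invariance discussion following Otal's definition. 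I'd lean on Otal (\cite{ota}, Proposition A.5) and the handlebody prototype in \cite{min} (Lemma~4.5) for the technical packaging of this surgery, and on the characterization of Masur domain laminations as those meeting every essential annulus (stated in Section~\ref{compbodyback}) to close each case.
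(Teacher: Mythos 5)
Your overall architecture matches the paper's: work through the topological interpretation of $\Gamma_\alpha(\mu_\lambda)$, and in the not-strongly-connected case take a regular neighborhood of the relevant meridians together with the arcs of $\lambda$ meeting them; since all cycles are trivial, some boundary component of that neighborhood is an essential curve on $\partial_{ext}M$ bounding a disc in $M$ and disjoint from $\lambda$, contradicting $\lambda\in\mathcal{O}(M)$. That half of your proposal is essentially the paper's proof, and your worry (i) about bands is resolved exactly as you suspect, by taking the neighborhood of the whole union at once rather than banding arc by arc.

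The strong-cutpoint case, however, has a genuine gap. You claim the configuration lets you band the discs of $G_1$ across $\partial D_0$ to produce an \emph{essential disc $\Delta_0$ disjoint from $\lambda$}, contradicting the Masur domain condition. This cannot be right as stated: any disc obtained by banding across $D_0$ has boundary containing subarcs of $\alpha_0=\partial D_0$, and $\alpha_0$ necessarily intersects $\lambda$ (it is a meridian and $\lambda$ is in the Masur domain), so $\Delta_0$ is not disjoint from $\lambda$. More tellingly, if a strong cutpoint really produced an essential disc missing $\lambda$, the tight-position hypothesis would be irrelevant to this case, whereas it is exactly what the cutpoint case must use: a Masur domain lamination \emph{not} in tight position can perfectly well have a Whitehead graph with strong cutpoints. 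The correct output of the surgery is weaker --- a \emph{wave}: among the boundary components of a regular neighborhood of $\bigl(\bigcup\beta_i\bigr)\cup\lambda'$ (where the $\beta_i$ are the meridians of $G_1$ and $\lambda'$ the arcs of $\lambda$ meeting them), the closed ones bound discs because $G_1$ is not strongly connected, so some \emph{arc} component $a_k$ with endpoints on $\alpha_0$ must ``enclose'' a $\beta_i$; that $a_k$ is disjoint from $\lambda$, homotopic into $\alpha_0$ in $M$ but not in $\partial_{ext}M$, and this contradicts tight position rather than membership in $\mathcal{O}(M)$. Separately, your plan to rerun the cutpoint argument ``with essential annuli in place of essential discs'' when $M$ is a connect sum of two trivial $I$-bundles is unnecessary and does not quite parse here: the Whitehead graph's vertices are meridians in every case, and the meridian-missing-$\lambda$ and wave contradictions go through uniformly using the $\mathcal{M}''$ form of the Masur domain definition.
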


\begin{proof} We will use the topological interpretation of the Whitehead graph discussed in Section \ref{topinterpret}.
Suppose that a component $C$ of $\Gamma_\alpha(\mu_\lambda)^\Sigma$ is not strongly connected.  Let $D_1, \ldots, D_k$
be the components of $D$ that correspond to the vertices
in $C$.  Let $\mathcal{A}$ be the union of the $D_i$ and $\lambda \cap \Sigma$.  Take $\mathcal{N}(\mathcal{A})$ a regular neighborhood of $\mathcal{A}$.  The boundary of $\mathcal{N}(\mathcal{A})$ consists of simple closed curves that each bound a disc in $\Sigma$, as $C$ is not strongly connected.  One of these boundary components $b$ must bound a disc containing some $D_i$.  Then, $b$ is nontrivial on $\partial_{ext}M$ and so we have found a meridian that misses $\lambda$, a contradiction.

Suppose that $C$ has a strong cutpoint.  Let $F$ denote $\Sigma- \cup \inte(D_i).$  Let $D_0$ correspond to the strong cutpoint and let $G_1$ and
$G_2$ be the two graphs whose intersection is $D_0$ such that $G_1$ is not
strongly connected.  Let $\beta_1, \ldots, \beta_t$ be the merdians in $F$ corresponding to vertices of $G_1$. Let $\lambda' \subset \lambda \cap F$ consisting of arcs intersecting at least one $\beta_i$.  Let $\mathcal{N}$ denote a regular neighborhood of $\lambda' \cup (\cup \beta_i)$.  The boundary of $\mathcal{N}$ consists of closed curves $c_1, \ldots, c_l$ and arcs $a_1, \ldots, a_s$ with endpoints lying on $\alpha_0$.  Since $G_1$ is not strongly connected, each $c_i$ bounds a disc.  We claim that at least one of the arcs $a_i$ is a wave, i.e., an arc disjoint from $\lambda$, homotopic relative to its endpoints, in $M$ but not in $\partial_{ext}M$ into $\alpha_0$.  Indeed, any $a_i$ is disjoint from $\lambda$, by construction and homotopic in $M$ into $\alpha_0,$ since $G_1$ is not strongly connected.  For each arc $a_i$ choose an arc $b_i$ in $\alpha_0$ sharing the same endpoints as $b_i$ such that $a_i \cup b_i$ bounds a disc not containing $\alpha_0$.  At least one of the loops $c_1, \ldots, c_l, a_1 \cup b_1, \ldots, a_s \cup b_s$ contains some $\beta_i,$ since they form the boundary components of $\mathcal{N}$.  If $c_i$ contained some $\beta_i$, then $\beta_i$ would not be connected to $\alpha_0$, which contradicts how we chose $\beta_i$.  Therefore, some $a_k \cup b_k$ bounds a disc containing at least one $\beta_i$.  In particular, $a_k$ will not be homotopic in $\partial_{ext}M$ into $\alpha_0$.  So $a_k$ is a wave disjoint from $\lambda,$ a contradiction to the assumption that $\lambda$ is in tight position (see Figure \ref{masurdomainpic}).

\begin{figure}
\begin{center}
\includegraphics[width=80mm]{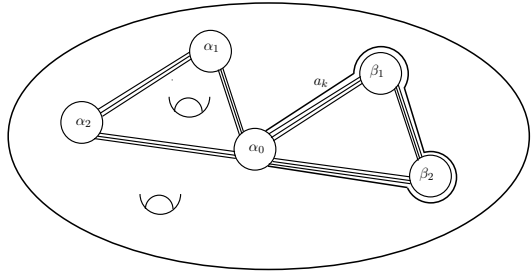}
\caption[A wave]{The arc $a_k$ is a wave disjoint from $\lambda.$}
\label{masurdomainpic}
\end{center}
\end{figure}

\end{proof}

\subsection{Examples of separable-stable points on $\partial AH(M)$} \label{examples}
In this section we prove Proposition \ref{pspoints}, which shows that two types of points on $\partial AH(M)$ are separable-stable; namely that a geometrically finite point with one cusp associated to a Masur domain curve is separable-stable and a purely hyperbolic geometrically infinite point with one geometrically infinite end corresponding to the exterior boundary component is separable-stable.  The case that a geometrically finite point with one cusp associated to a Masur domain curve for handlebodies is separable stable was proven by Minsky \cite{min}. The case that a purely hyperbolic geometrically infinite point is separable-stable for handlebodies was proven by Jeon-Kim in \cite{jeo-kim}.

\begin{lemma} \label{compact}
Let $\rho$ be a discrete and faithful representation of $\pi_1(M)$ into $\textup{PSL}(2, \mathbb{C}).$  Then $\rho$ is separable-stable if and only if
\begin{enumalph}
\item $\rho(g)$ is hyperbolic for any separable element $g$ and
\item there exists a compact subset $\Omega$ of $N_\rho = \H^3/\rho(\pi_1(M))$ such that the set of geodesics corresponding to separable elements of $\pi_1(M)$ is contained in $\Omega$.
\end{enumalph}
\end{lemma}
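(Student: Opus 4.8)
The plan is to prove the two implications separately, using throughout that, since $\rho$ is discrete and faithful, $\rho(G)$ acts properly discontinuously on $\H^3$, so the orbit map $\tau_{\rho,x}\colon C_S(G)\to\H^3$ is a \emph{proper} $\rho$-equivariant map; let $\pi\colon\H^3\to N_\rho$ denote the covering projection and set $C_\rho=\max_i d_{\H^3}(x,\rho(x_i)x)$, so that $\tau_{\rho,x}$ is $C_\rho$-coarsely Lipschitz and its image lies in the $C_\rho/2$-neighborhood of the orbit $\rho(G)x$.

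First suppose $\rho$ is $(K,A)$-separable-stable with basepoint $x$, and let $g$ be separable with $l\in\mathcal L_S(g)$ a bi-infinite geodesic of $\mathcal S_S$. Then $L:=\tau_{\rho,x}(l)$ is a $(K,A)$-quasigeodesic line in $\H^3$, hence has two well-defined ideal endpoints; since any two geodesics of $C_S(G)$ joining $g_-$ to $g_+$ lie within $2\delta$ of one another, $L$ is moved a bounded Hausdorff distance by $\rho(g)$, so its endpoints are fixed by $\rho(g)$. As $\rho$ is faithful, $g$ has infinite order, and $\rho(G)$ is discrete, $\rho(g)$ is therefore loxodromic, which is condition (a), and its axis $A_{\rho(g)}$ is the geodesic joining those two endpoints. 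By the stability of quasigeodesics, $L$ lies within $R=R(K,A)$ of $A_{\rho(g)}$; combined with $L\subset\mathcal N_{C_\rho/2}(\rho(G)x)$ this gives $A_{\rho(g)}\subset\mathcal N_{R+C_\rho/2}(\rho(G)x)$, whence $\pi(A_{\rho(g)})\subset\bar B_{N_\rho}(\pi(x),R+C_\rho/2)=:\Omega$, a compact set independent of $g$. This is condition (b).

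Conversely, assume (a) and (b), with $\Omega$ as in (b); enlarging $\Omega$ we may assume it is connected and contains $\bar B_{N_\rho}(\pi(x),2C_\rho)$, and put $R=\operatorname{diam}_{N_\rho}(\Omega)$. Then $\tilde\Omega:=\pi^{-1}(\Omega)$ is $\rho(G)$-invariant with compact quotient, and each $y\in\tilde\Omega$ lies within $R$ of $\rho(G)x$. By (a) every separable $\rho(g)$ is loxodromic, and by (b) its axis lies in $\tilde\Omega$. We aim to verify the nesting criterion of Lemma~\ref{qg condition} with constants $c>0$, $i\in\mathbb N$ independent of the separable geodesic: for a fixed $i$, the hyperplane $P_{j,i}$ attached to $\tau_{\rho,x}(l)$ is determined by the $\tau_{\rho,x}$-image of a bounded combinatorial window of $l$ about $l(ji)$, and by $\rho$-equivariance these windows fall into finitely many $G$-orbits, so the relevant hyperplanes are $\rho(G)$-translates of finitely many hyperplanes of $\H^3$; it therefore suffices to show that for all large enough fixed $i$, every such window (coming from some $l\in\mathcal S_S$) maps to a configuration whose hyperplanes are nested and uniformly separated. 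This follows by combining three facts: (i) $\tau_{\rho,x}(l)$ is $\langle\rho(g)\rangle$-periodic and hence makes definite progress, at least $l_\rho(g)>0$, over each $g$-period of length $\|g\|$ in $C_S(G)$; (ii) properness of $\tau_{\rho,x}$ forbids backtracking of $\tau_{\rho,x}(l)$ over large combinatorial scales; and (iii) the confinement of the axes $A_{\rho(g)}$ to $\tilde\Omega$, together with compactness of $\tilde\Omega/\rho(G)$ and an Arzel\`a--Ascoli argument over the compact quotient $\overline{\mathcal S_S}/G$, upgrades (i) and (ii) to a \emph{uniform} linear bound $l_\rho(g)\ge c_0\|g\|$ valid for all but finitely many separable $g$, the finitely many exceptions (those whose geodesic is short in the fixed manifold $N_\rho$) being absorbed into the final constants. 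Feeding these uniform estimates into Lemma~\ref{qg condition} produces $(K',A')$ so that $\tau_{\rho,x}$ sends every $l\in\mathcal S_S$ to a $(K',A')$-quasigeodesic; that is, $\rho$ is separable-stable.

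The main obstacle is step (iii): passing from the soft properness of $\tau_{\rho,x}$, which a priori yields only a proper lower bound $d_{\H^3}(\tau_{\rho,x}(a),\tau_{\rho,x}(b))\ge\psi(d_{C_S}(a,b))$ for some proper function $\psi$ that may be sublinear, to a \emph{uniform linear} comparison between $l_\rho(g)$ and $\|g\|$ for separable $g$. Geometrically this is the assertion that a separable geodesic trapped in the compact set $\Omega$ cannot wind around inefficiently; the crux is to rule out, uniformly, a sequence of separable elements $g_n$ with $l_\rho(g_n)/\|g_n\|\to 0$ whose geodesics stay in $\Omega$, and this is exactly where compactness of $\Omega$ — for instance finiteness of the closed geodesics of bounded length meeting $\Omega$ — enters essentially.
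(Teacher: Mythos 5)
Your forward direction is correct and is essentially the paper's argument: separable elements must go to loxodromics, and stability of $(K,A)$-quasi-geodesics traps every separable axis in a bounded neighborhood of the orbit $\rho(G)x$, which projects into a compact set. The converse is where the problem lies. You reduce everything to a uniform linear lower bound $l_\rho(g)\ge c_0\|g\|$ over all separable $g$ (your step (iii)), and you yourself flag this as ``the main obstacle'' and ``the crux'' --- but you never prove it. The appeal to an Arzel\`a--Ascoli argument over the compact quotient $\overline{\mathcal{S}_S}/G$ does not close the gap: the limit points of separable axes in that quotient need not be axes of separable elements (controlling such limits is precisely the content of Lemma~\ref{notprimstab} and Proposition~\ref{pspoints} later in the paper, via Cannon--Thurston maps and Whitehead graphs), so compactness alone cannot upgrade the possibly sublinear properness bound $\psi$ to a uniform linear one. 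Moreover, even granting (iii), the reduction is incomplete: quasi-geodesicity of the $\langle\rho(g)\rangle$-periodic line $\tau_{\rho,x}(l)$ requires a linear lower bound on $d_{\H^3}(\tau_{\rho,x}(a),\tau_{\rho,x}(b))$ for \emph{all} pairs $a,b$ on $l$, and for pairs lying within a single period --- whose length $\|g\|$ is unbounded as $g$ ranges over separable elements --- neither periodicity nor (iii) gives any control beyond $\psi$.

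The idea you are missing is the paper's use of the intrinsic path metric on $\tilde\Omega=\pi^{-1}(\Omega)$, after enlarging $\Omega$ to a compact core containing the image of the Cayley graph (tameness guarantees this and makes $\tilde\Omega$ connected). By \v{S}varc--Milnor, $\tau_{\rho,x}\colon C_S(G)\to(\tilde\Omega,d_{\tilde\Omega})$ is then a $(K,A)$-quasi-isometry, so every $l\in\mathcal{S}_S$ maps to a $(K,A)$-quasi-geodesic for the \emph{intrinsic} metric with no further work --- no translation-length estimates needed. Hypothesis (b) is used exactly once and decisively: it places $\textup{Ax}(\rho(g))$ inside $\tilde\Omega$, where, being an $\H^3$-geodesic, it satisfies $d_{\tilde\Omega}=d_{\H^3}$ along itself. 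Stability of quasi-geodesics in the Gromov-hyperbolic space $\tilde\Omega$ puts $\tau_{\rho,x}(l)$ within a uniform $R$ of that axis, and the closest-point-projection estimate $d_{\tilde\Omega}(x,y)\le d_{\H^3}(x,y)+4R$ converts intrinsic quasi-geodesics into extrinsic $(K,A+4R)$-quasi-geodesics. This bypasses your steps (i)--(iii) entirely and is where the compactness of $\Omega$ actually does its work; I recommend rewriting the converse along these lines.
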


\begin{proof}
For the forward direction, suppose that $\rho$ is $(K, A)$-separable-stable.  Then, $\rho(g)$ must be hyperbolic for all separable-elements $g$ for if $\rho(g)$ were parabolic, then for any geodesic $l$ connecting the fixed points $g_+$ and $g_-$ on $\partial C_S(G)$, $\tau_{\rho, x}(l)$ would not be a quasi-geodesic.  For the second property, notice that elements of $\mathcal{S}$ stay within a bounded neighborhood of their corresponding geodesic axes in $\H^3$.  In particular, geodesics representing separable elements will stay in a bounded neighborhood of the image of the Cayley graph in $N_\rho$, which is a compact set.

Conversely, suppose that $\rho(g)$ is hyperbolic for all separable elements $g$ and that there exists a compact set $\Omega$ such that all separable-geodesics of $N_\rho$ are contained in $\Omega$.  Without loss of generality, since $N_\rho$ is tame (by \cite{ago} or \cite{cal-gab}),  we can assume that $\Omega$ is a compact core $C$ of $N_\rho$ containing the image of $C_S(G)/ \rho(G)$ in $N_\rho$.  This implies that $\tilde \Omega$, the preimage of $\Omega$ in $\H^3,$ is connected.  For some $(K, A)$, we have that $\tau_{\rho, x}:C_S(G) \rightarrow \tilde \Omega \subset \H^3$ is a $(K, A)$-quasi-isometry from $C_S(G)$ to $\tilde \Omega$ with the intrinsic metric.  In particular, any geodesic $l$ in $\mathcal{S}$ connecting $g_-$ and $g_+$, the fixed points of $g$, maps to a $(K, A)$-quasi-geodesic in $\tilde \Omega$, with the intrinsic metric.  Then, $\tau_{\rho,x}(l)$ lies in a $R=R_\Omega(K, A)$-neighborhood of $\textup{Ax}(g)$ in the intrinsic metric and also with the extrinsic metric, where, $\textup{Ax}(g)$ is the axis of $\rho(g)$ in $\H^3.$
If $x,y$ lie on $\tau_{\rho, x}(l)$ and if $\pi$ denotes the closest point projection onto $\textup{Ax}(g)$ in $\tilde \Omega$, then
$$
d_{\tilde \Omega}(x, y) \leq d_{\tilde \Omega}(\pi(x),\pi( y)) + 2R = d_{\H^3}(\pi(x), \pi(y)) + 2R \leq d_{\H^3} (x, y)+4R
$$
This implies that $\tau_{\rho, x}(l)$ is a $(K, A+4R)$-quasi-geodesic in $\tilde \Omega$ with the extrinsic metric.  Hence $\rho$ is $(K, A+4R)$-separable-stable.
\end{proof}

\begin{lemma} \label{notprimstab}
Let $\rho$ be a discrete faithful representation such that $\rho(g)$ is hyperbolic for all separable elements $g$.  If $\rho$ is not separable-stable, then there exists a sequence of separable elements $g_i$ such that the endpoints of $\textup{Ax}(\rho(g_i))$ converge to a single point $z$ in $\partial \H^3$ but the points $g_i^+$ and $g_i^-$ in $\partial C_S(G)$ converge to distinct points $z^+$ and $z^-$ in $\partial C_S(G)$.
\end{lemma}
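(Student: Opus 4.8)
Throughout write $G=\pi_1(M)$. The plan is to combine the compactness criterion of Lemma~\ref{compact} with a normalization of the $g_i$ and a routine extraction argument; I will actually aim for the (equivalent) statement that the closest-point distance from $x$ to $\textup{Ax}(\rho(g_i))$ can be made to tend to infinity.

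\emph{Reduction via Lemma~\ref{compact}.} By hypothesis condition (a) of Lemma~\ref{compact} holds, so the failure of separable-stability of $\rho$ is precisely a failure of condition (b): no compact subset of $N_\rho$ contains all the geodesic representatives of separable elements. Let $\Omega\subset N_\rho$ be the (compact) image of $C_S(G)/\rho(G)$ under the map induced by $\tau_{\rho,x}$; then $\bar x\in\Omega$ and its preimage in $\H^3$ is $\widetilde\Omega=\tau_{\rho,x}(C_S(G))$. For each $i$ the compact $i$-neighbourhood of $\Omega$ fails to contain all separable geodesics, so there is a separable $h_i$ whose geodesic representative $\gamma_i$ carries a point $q_i$ with $d_{N_\rho}(q_i,\Omega)>i$; consequently \emph{every} lift of $q_i$ to $\H^3$ lies at distance $>i$ from $\widetilde\Omega$, hence from $x$. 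I would then replace $h_i$ by a conjugate $g_i$ (still separable, being a conjugate of a separable element) whose word length equals its translation length $\|g_i\|$ in $C_S(G)$; then the identity lies within a bounded distance of every geodesic joining $g_i^-$ to $g_i^+$. This does not disturb the escape, since $\textup{Ax}(\rho(g_i))$ is again a lift of $\gamma_i$ and so carries a lift $\tilde q_i$ of $q_i$ with $d_{\H^3}(x,\tilde q_i)>i$.

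\emph{The two boundaries.} The combinatorial side is soft: $G$ is a free product of closed hyperbolic surface groups and infinite cyclic groups, hence word hyperbolic, so $C_S(G)$ is $\delta$-hyperbolic with compact Gromov boundary. Since a geodesic from $g_i^-$ to $g_i^+$ passes uniformly close to the identity, the Gromov product $(g_i^+\mid g_i^-)_{\mathrm{id}}$ is bounded independently of $i$; passing to a subsequence, $g_i^+\to z^+$ and $g_i^-\to z^-$ in $\partial C_S(G)$, and the uniform bound forces $z^+\neq z^-$. For the geometric side, pass to a further subsequence so that the endpoints of $\textup{Ax}(\rho(g_i))$ converge to $\zeta^+,\zeta^-\in\partial\H^3$ and $\tilde q_i\to z\in\partial\H^3$ (the $\tilde q_i$ leave every ball about $x$). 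A bi-infinite geodesic of $\H^3$ at distance $D$ from $x$ subtends visual angle $O(e^{-D})$ at $x$ — equivalently, the Gromov product at $x$ of its two endpoints is $D+O(1)$ — so it is enough to prove that $d\bigl(x,\textup{Ax}(\rho(g_i))\bigr)\to\infty$. This yields $\zeta^+=\zeta^-$, and since $\tilde q_i$ lies on $\textup{Ax}(\rho(g_i))$ the common value is $z$; the sequence $(g_i)$ then has exactly the asserted properties.

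\emph{The hard part.} Establishing the divergence $d\bigl(x,\textup{Ax}(\rho(g_i))\bigr)\to\infty$ is where I expect the real content to lie. Suppose toward a contradiction that, along a subsequence, $d\bigl(x,\textup{Ax}(\rho(g_i))\bigr)\le D$. The $\gamma_i$ are eventually distinct (a single closed geodesic is compact and cannot meet arbitrarily deep parts of $N_\rho$), so by discreteness and faithfulness of $\rho$ the isometries $\rho(g_i)$ are distinct; an isometry whose axis meets the $D$-ball about $x$ and whose translation length is bounded varies in a compact subset of $\pslc$, so discreteness forces $l_{\H^3}(\rho(g_i))\to\infty$. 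Hence in this alternative each $\gamma_i$ is a closed geodesic of diverging length meeting a fixed compact part of $N_\rho$ yet penetrating to depth $>i$ out some end $E$, and $E$ must be geometrically infinite, since a closed geodesic lies in the convex core and avoids every cusp, and is therefore confined to a compact set when $N_\rho$ is geometrically finite (and cannot exit a geometrically finite end in any case). Ruling this out is the crux: a sufficiently deep excursion out $E$ should force $\gamma_i$ to track the ending lamination of $E$, which is a Masur domain — indeed doubly incompressible — lamination and so crosses every essential disc and every essential annulus, whereas a separable element is by definition freely homotopic off an essential disc (or an essential annulus, in the small case). This is precisely the separable-versus-Masur-domain dichotomy isolated by Otal's Whitehead graphs in Propositions~\ref{primnsc} and~\ref{masurdomain}. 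When a Cannon–Thurston map $\bar\tau_\rho$ is available — as in the two cases in which Lemma~\ref{notprimstab} is applied, using Floyd's and Mj's theorems of Section~\ref{ctmaps} — the conclusion reads off directly: $\rho(g_i)^{\pm}=\bar\tau_\rho(g_i^{\pm})\to\bar\tau_\rho(z^{\pm})$, so $\zeta^{\pm}=\bar\tau_\rho(z^{\pm})$ and the required collapse $\zeta^+=\zeta^-$ is exactly the assertion that $z^+$ and $z^-$ are the endpoints of a leaf of an ending lamination in Mj's characterization.
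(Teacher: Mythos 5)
Your reduction via Lemma \ref{compact}, the extraction of a point $q_i$ on $\gamma_i$ escaping the orbit $\rho(G)x$, and the observation that it suffices to produce conjugates $g_i$ with $e$ uniformly close to the geodesic $[g_i^-,g_i^+]$ in $C_S(G)$ and with $d\bigl(x,\textup{Ax}(\rho(g_i))\bigr)\to\infty$ all match the paper's strategy. The gap is in how you choose the conjugate. Cyclic reduction guarantees only the first property; a single point of the axis being far from $x$ does not make the whole axis far from $x$, and for the cyclically reduced representative there is no reason its axis should avoid a neighborhood of $x$. You correctly flag this as ``the hard part,'' but the argument you sketch to close it does not work: the claim that a deep excursion forces $\gamma_i$ to track the ending lamination is a substantive unproven assertion; the lemma is stated for an arbitrary discrete faithful $\rho$ with no separable parabolics, not only for the two cases of Proposition \ref{pspoints} where a Cannon--Thurston map and an ending lamination are available; and the final step is circular --- via the Cannon--Thurston map, $\zeta^+=\zeta^-$ is \emph{equivalent} to $z^{\pm}$ being an ending-lamination (or cusp) pair, which is precisely what would need proof, and is in fact the conclusion that Proposition \ref{pspoints} later feeds into the Whitehead-graph dichotomy.

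The paper closes the gap with a different normalization that achieves both properties at once and needs none of this machinery. Take any $g_i$ whose axis is the chosen lift $\widetilde\gamma_i$ and let $l_i$ be a geodesic in $C_S(G)$ from $g_i^-$ to $g_i^+$. The lift $\tilde q_i$ of $q_i$ lies on $\widetilde\gamma_i$ at distance $>D_i$ from the entire orbit $\rho(G)x$; since $\tau_{\rho,x}(l_i)$ is a connected, $\rho(g_i)$-invariant set accumulating at both endpoints of $\widetilde\gamma_i$, its nearest-point projection covers $\widetilde\gamma_i$, in particular $\tilde q_i$, so some vertex $v_i$ of $l_i$ satisfies $d\bigl(\tau_{\rho,x}(v_i),\widetilde\gamma_i\bigr)\geq D_i - O(1)$. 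Conjugating by $v_i$ shifts $l_i$ to a geodesic through $e$ (so the limits $z^{\pm}$ of $(v_i^{-1}g_iv_i)^{\pm}$ are distinct) and shifts the axis to one at distance at least $D_i-O(1)$ from $x$ (so its endpoints collapse to a single $z$). Reworking your proof around this choice of conjugating element removes the hard part entirely; the rest of your argument then goes through.
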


\begin{proof}
By Lemma \ref{compact}, since $\rho$ is not separable-stable, the set of geodesics homotopic to separable curves is not contained in any compact set of $N_\rho$.  Let $\{\gamma_i\}$ be a sequence of separable geodesics such that $\{\gamma_i\}$ is not contained in any compact set.  Recall that the image of the Cayley graph under $\tau_{\rho, x}$ in $N_\rho$ has only one vertex, $v$.  Choose $D_i$ approaching infinity such that $\gamma_i$ does not lie in a ball of radius $D_i$ around $v$.

Fix a set of lifts $\widetilde \gamma_i$ of $\gamma_i$.  Then $\widetilde \gamma_i$ is an axis for $\rho(g_i)$ for some separable element $g_i$.  Let $l_i$ be a geodesic in the Cayley graph connecting $g_i^+$ and $g_i^-$.  There exists a vertex $v_i$ on $\tau_{\rho,x}(l_i)$ such that the distance to $\widetilde \gamma_i$ is at least $D_i$.  Shift $l_i$ to $l_i':=v_i^{-1} \cdot l_i$.  Then, $l_i'$ passes through $e$ the identity element and connects the fixed points of $v_i^{-1}g_iv_i$, which is still a separable element.  If $\widetilde \gamma_i'$ is $\rho(v_i^{-1}) \cdot \widetilde \gamma_i$, the distance from $x$ to $\widetilde \gamma_i'$ is $D_i$.  This implies that, up to subsequence, the endpoints of $\widetilde \gamma_i$ approach a single point $z$ on $\partial \H^3$.  There exists $z^+$ and $z^-$ on $\partial C_S(G)$ such that up to subsequence, $v_i^{-1}g_iv_i^+ \rightarrow z^+$ and $v_i^{-1}g_iv_i^- \rightarrow z^-$.  Since each $l_i'$ passes through the $e$, $z^+$ and $z^-$ are distinct.
\end{proof}

\begin{proposition} \label{pspoints}
Let $\rho$ be a discrete and faithful representation such that $N_\rho = \H^3/\rho(G)$ is homeomorphic to the interior of $M$ satisfying one of the following two conditions.
\begin{enumalph}
\item $\rho$ is a geometrically finite representation with one cusp associated to a Masur domain curve or
\item $\rho$ is a purely hyperbolic representation where the end corresponding to the exterior boundary is geometrically infinite and all other ends are convex cocompact.
\end{enumalph}
Then $\rho$ is separable-stable.
\end{proposition}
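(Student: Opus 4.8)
The plan is to argue by contradiction, combining Lemmas~\ref{compact} and~\ref{notprimstab} with the Cannon--Thurston theorems of Floyd and Mj and the Whitehead-graph dichotomy of Propositions~\ref{primnsc} and~\ref{masurdomain}. Since $M$ has no toroidal boundary components, $G=\pi_1(M)$ is word-hyperbolic, so $\partial C_S(G)$ and the boundary extensions of orbit maps used below are available. First I would verify hypothesis~(a) of Lemma~\ref{compact}, that $\rho(g)$ is hyperbolic for every separable $g$: in case~(b) this is immediate since $\rho$ is purely hyperbolic, and in case~(a) the only parabolics of $\rho(G)$ are conjugates of powers of the cusp curve $c$, which, being a Masur domain curve, meets every meridian and hence is not separable. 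So hypothesis~(a) holds, and by Lemma~\ref{compact} it suffices to show the separable geodesics of $N_\rho$ lie in a compact set. Assuming not, $\rho$ is not separable-stable, and Lemma~\ref{notprimstab} produces separable elements $g_i$ with $g_i^+\to z^+$ and $g_i^-\to z^-$ for distinct $z^\pm\in\partial C_S(G)$, while both endpoints of $\textup{Ax}(\rho(g_i))$ converge to a single point $z\in\partial\H^3$.

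Next I would pin down $z^\pm$ using the Cannon--Thurston map $\bar\tau_\rho$ of $N_\rho$, which exists and is continuous by Floyd's theorem in case~(a) and by Mj's theorem in case~(b). Continuity gives $\bar\tau_\rho(z^+)=\bar\tau_\rho(z^-)=z$ with $z^+\neq z^-$, so $z$ lies in the non-injectivity locus of $\bar\tau_\rho$. In case~(a), Floyd's theorem forces $z$ to be a rank-one parabolic fixed point, fixed by a conjugate $hch^{-1}$ of $c$; as a loxodromic element of a hyperbolic group has north--south dynamics on the boundary, its only periodic pair is its fixed-point pair, so $\{z^+,z^-\}$ must be the fixed-point pair of $hch^{-1}$, and replacing each $g_i$ by $h^{-1}g_ih$ (still separable, separability being a free-homotopy invariant) we may assume $z^\pm=c_\pm$. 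In case~(b), Mj's theorem says $z^+$ and $z^-$ are the endpoints of a leaf $l$ of the ending lamination $\lambda$ of the unique geometrically infinite end, or vertices of a complementary ideal polygon; here $\lambda$ is a minimal, filling, doubly incompressible lamination on the exterior boundary.

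The heart of the matter is then a comparison of Whitehead graphs relative to the fixed convex cocompact $\sigma$. Fix a system of meridians $\alpha$ putting $c$ (resp.\ $\lambda$) in tight position, so that by Proposition~\ref{masurdomain} every component of the \emph{finite} graph $\Gamma_\alpha(\mu_c)$ (resp.\ $\Gamma_\alpha(\mu_\lambda)$) is strongly connected without strong cutpoint --- the proof of Proposition~\ref{masurdomain} uses only tight position and that $\lambda$ meets every meridian, i.e.\ double incompressibility, so it applies verbatim to the ending lamination. Because $\bar\tau_\sigma$ restricts to a homeomorphism $\partial C_S(G)\to\Lambda(\sigma(G))$, the relations $g_i^\pm\to z^\pm$ translate into: the axes $\textup{Ax}(\sigma(g_i))\subset\H^3$ converge, as geodesics, to $\textup{Ax}(\sigma(c))$ in case~(a), and in case~(b) to the leaf $h(\tilde l)$ of the realized lamination (resp.\ to the diagonal joining the realized vertices of the ideal polygon), where $h\colon\partial_{ext}M\to N_\sigma$ realizes $\lambda$ and the identification of $z^\pm$ is the one from Section~\ref{ctmaps} taken with $\rho'=\sigma$. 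Now fix an edge $(U_a,gU_b)$ of $\Gamma_\alpha(\mu_c)$ (resp.\ $\Gamma_\alpha(\mu_\lambda)$); it is witnessed by a $\sigma(G)$-translate of $\textup{Ax}(\sigma(c))$ (resp.\ of a realized leaf) whose endpoint pair lies in the \emph{open} set $U_a\times gU_b$. In case~(b) that leaf need not be $h(\tilde l)$, but by minimality of $\lambda$ the $\sigma(G)$-translates of $h(\tilde l)$ (resp.\ of the diagonal, which spirals onto $\lambda$) accumulate on all of $h(\tilde\lambda)$, so there is $k\in\sigma(G)$ with $k\cdot h(\tilde l)$ having endpoint pair in $U_a\times gU_b$; since $k\cdot\textup{Ax}(\sigma(g_i))\to k\cdot h(\tilde l)$ and $U_a\times gU_b$ is open, for $i$ large $(U_a,gU_b)$ is an edge of $\Gamma_\alpha(\mu_{g_i})$. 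As there are only finitely many edges, for all large $i$ the graph $\Gamma_\alpha(\mu_{g_i})$ contains $\Gamma_\alpha(\mu_c)$ (resp.\ $\Gamma_\alpha(\mu_\lambda)$) on the same vertex set. Keeping straight the translations between $\partial C_S(G)$, $\partial\H^3$, the Cannon--Thurston data and the combinatorics of $\Gamma_\alpha$ is where I expect the real work to lie.

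The proof then closes with an elementary observation: enlarging the edge set of a graph whose components are each strongly connected and free of strong cutpoints preserves this property componentwise. Indeed a component $C$ of the enlarged graph is a union of (strongly connected) components of the original, hence has a nontrivial cycle and is strongly connected; and if $v$ were a strong cutpoint of $C$ with one side $G_1$ not strongly connected, then every vertex of $C$ would lie in the original component $C'$ containing $v$ (a vertex on the $G_1$-side lying in an original component avoiding $v$ would force $G_1$ to be strongly connected), so $V(C)=V(C')$, and restricting the cutpoint decomposition of $C$ to $C'$ removes only edges and thus yields a strong cutpoint of $C'$, a contradiction. Applying this with $\Gamma_\alpha(\mu_c)$ (resp.\ $\Gamma_\alpha(\mu_\lambda)$) inside $\Gamma_\alpha(\mu_{g_i})$ shows every component of $\Gamma_\alpha(\mu_{g_i})$ is strongly connected without strong cutpoint, contradicting Proposition~\ref{primnsc} since $g_i$ is separable. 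Hence the separable geodesics of $N_\rho$ lie in a compact set and $\rho$ is separable-stable. When $M$ is a handlebody this recovers Minsky's argument in case~(a) and Jeon--Kim's in case~(b), and when $M$ is the connect sum of two trivial $I$-bundles one runs the same argument with the annular analogue of the Whitehead dichotomy in place of Propositions~\ref{primnsc} and~\ref{masurdomain}.
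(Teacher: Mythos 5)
Your argument follows the paper's proof almost step for step: contradiction via Lemma~\ref{notprimstab}, identification of $z^\pm$ through the Cannon--Thurston maps of Floyd and Mj, transfer to the convex cocompact reference $\sigma$, inclusion of the finite graph $\Gamma_\alpha(\mu_\lambda)$ into $\Gamma_\alpha(\mu_{g_i})$ for large $i$, and the collision between Propositions~\ref{masurdomain} and~\ref{primnsc}. Two of your additions are genuine small improvements: you verify the hypothesis of Lemma~\ref{notprimstab} that $\rho(g)$ is hyperbolic for every separable $g$ (the paper applies the lemma without comment), and your graph-theoretic closing spells out why enlarging the edge set preserves strong connectivity and the absence of strong cutpoints. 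Your one substantive deviation is the middle step: to show each edge $(U_a,gU_b)$ of $\Gamma_\alpha(\mu_\lambda)$ eventually appears in $\Gamma_\alpha(\mu_{g_i})$, you argue that $\sigma(G)$-translates of the diagonal $L$ joining $\bar\tau_{\sigma,x}(z^+)$ to $\bar\tau_{\sigma,x}(z^-)$ accumulate, with endpoint convergence, onto every leaf of the realized lamination, whereas the paper introduces the limit set $\mu_\infty$ of the $\mu_{g_i}$ and runs a tight-position argument with the half-leaf $r$ from $\widetilde\alpha_i$ to $\bar\tau_{\sigma,x}(z^+)$, using that $\widetilde\alpha_i$ separates $z^+$ from $z^-$ and $w$ to locate the second endpoint. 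Your version is viable, but the assertion that some translate of $L$ has its endpoint pair in $U_a\times gU_b$ is precisely where the work lies (recurrence of the half-leaf at arbitrary depth, fellow-traveling of the diagonal with boundary leaves of the complementary polygon, and the fact that geodesics close along long segments have close endpoints), and it needs to be written out.

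The one genuine gap is the case where $M$ is the connect sum of two trivial $I$-bundles over closed surfaces. There Proposition~\ref{primnsc} explicitly does not apply, and the ``annular analogue of the Whitehead dichotomy'' you invoke is not an off-the-shelf statement of the same shape: separability now means missing an essential annulus in one of the two $I$-bundle factors, so the obstruction is annular rather than a disc, and the contradiction is not reached through strong connectivity or strong cutpoints at all. One must define ad hoc when an edge $(U_i,gU_j)$ ``intersects'' an essential annulus $A$ (suitable lifts $\widetilde c_1\cup\widetilde c_2$ of $\partial A$ separate $U_i$ from $gU_j$ in $\partial\H^3$), show every edge of $\Gamma_\alpha(\mu_\lambda)$ intersects every such annulus because $\lambda$ lies in the Masur domain, pass this to $\Gamma_\alpha(\mu_{g_i})$ for large $i$ by the same limiting argument, and conclude that the geodesic representative of $g_i$ meets every essential annulus in either factor, contradicting separability. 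As written, your deferral to an unproved analogue leaves this exceptional case open.
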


\begin{proof}
Let $\lambda \subset S$ be the cusp curve if $\rho$ is of type (a) or the ending lamination if $\rho$ is of type (b).  If $\rho$ is not separable-stable, by Lemma \ref{notprimstab} there exists a sequence of separable elements $g_i$ such that the endpoints of $\textup{Ax}(\rho(g_i))$ converge to a single point $z$ in $\partial \H^3$ but the points $g_i^+$ and $g_i^-$ in $\partial C_S(G)$ converge to distinct points $z^+$ and $z^-$ in $\partial C_S(G)$.  In particular, $z^+$ and $z^-$ are identified under the Cannon--Thurston map for $\tau_{\rho,x}$.

Consider our fixed convex cocompact representation $\sigma: G \rightarrow \pslc$ used to define the Whitehead graph.  If $\bar\tau_{\sigma, x}$ is the Cannon--Thurston map for $\tau_{\sigma,x}$, using the results of Floyd and Mj (see Section \ref{ctmaps}), $\bar\tau_{\sigma, x}(z^+)$ and $\bar\tau_{\sigma, x}(z^-)$ are either
\begin{enumalph}
\item endpoints of $\textup{Ax}(\sigma(g))$ where $\rho(g)$ is parabolic or
\item endpoints of a leaf of the ending lamination or ideal endpoints of a complementary polygon
\end{enumalph}
where the first case occurs if $\rho$ is of type (a) and the second case occurs if $\rho$ is of type (b) (as in the statement of the proposition).

Let $\mu_\infty \subset \Lambda(\sigma(G)) \times \Lambda(\sigma(G))$ be the set of limit points of $\{\mu_{g_i}\}.$  Then $\mu_\infty$ is $\sigma(G)$-invariant and invariant under switching the two factors.  Moreover, $(\bar\tau_{\sigma, x}(z^+), \bar\tau_{\sigma, x}(z^-))$ lies in $\mu_\infty$.

Since ending laminations lie in the Masur domain (see Section \ref{ctmaps}), we can choose $\alpha$ a system of meridians such that $\lambda$ is in tight position with respect to $\alpha$ (see Section \ref{topinterpret}).  We first claim that $\Gamma_\alpha(\mu_\lambda)$ is contained in $\Gamma_\alpha(\mu_\infty)$.  In case (a) this is obvious as $\mu_\lambda$ is exactly the $\sigma(G)$ translates of $(\bar\tau_{\sigma, x}(z^+), \bar\tau_{\sigma, x}(z^-))$.  In case (b), let $L$ be the geodesic connecting $\bar\tau_{\sigma, x}(z^+)$ and $\bar\tau_{\sigma, x}(z^-)$ and $l$ be the geodesic in $\Omega(\sigma(G))$ that is a leaf of the preimage of the ending lamination with one endpoint $\bar\tau_{\sigma, x}(z^+)$.  Let $w$ be the other endpoint of $l$.  Recall that to define the Whitehead graph we fixed a system of meridians $\alpha$ on $\partial_C N_\sigma$ that bound discs $D$.  Let $\widetilde \alpha_i$ be a lift of one of the meridians $\alpha_i$ with the following property.  $\partial \H^3 - \widetilde \alpha_i$ has two components $W_1$ and $W_2$ such that $\bar\tau_{\sigma, x}(z^+)$ lies in $W_1$ and $\bar\tau_{\sigma, x}(z^-)$ and $w$ lie in $W_2$.  Let $r$ be the ray of $l$ that starts at $\widetilde \alpha_i$ and ends at $\bar\tau_{\sigma, x}(z^+)$ (see Figure \ref{notprimstabpic}).

\begin{figure}[ht!]
\begin{center}
 \includegraphics[width=80mm]{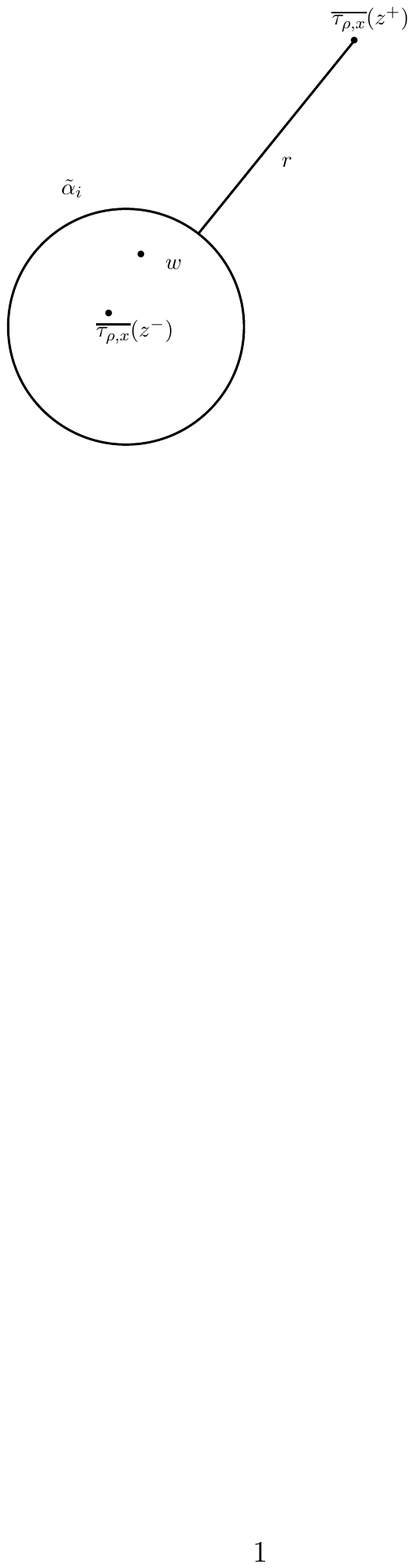}
\caption[A ray]{$r$ is a ray in $\partial \H^3$ that starts at $\widetilde \alpha_i$ and ends at $\overline{\tau_{\rho,x}}(z^+)$.}
\label{notprimstabpic}
\end{center}
\end{figure}

Edges in $\Gamma_\alpha(\mu_\lambda)$ correspond to homotopy classes of arcs of $\lambda$ connecting the components of $\alpha$.  Since $\lambda$ is minimal, $r'$ the image of $r$ in $\partial_{ext}M$ is dense in $\lambda$.  Then, for any edge in $\Gamma_\alpha(\mu_\lambda)$ there is an arc of $r'$ corresponding to that edge.
Let $(U, gV)$ be an edge of $\Gamma_\alpha(\mu_\lambda)$ and $r_0$ the arc of $r'$ corresponding to that edge, i.e., there is a lift $\widetilde r_0$ of $r_0$ with one endpoint on $\partial U$ and the other on $\partial gV$.  This means that there is a translate $h \cdot r$ of $r$ such that $h \cdot r$ intersects $\partial U$ and $\partial gV$.  This implies that $h \cdot \bar\tau_{\sigma, x}(z^+)$ lies in $U$ or $gV$.  Without loss of generality assume that $h \cdot \bar\tau_{\sigma, x}(z^+)$ lies in $gV$.  Then it suffices to show that $h \cdot \bar\tau_{\sigma, x}(z^-)$ lies in $U$.  Since $r$ intersects $U$ and is in tight position with respect to $\alpha$, the translate $h \cdot \widetilde \alpha_i$ must lie inside $U$. This implies that $h \cdot \bar\tau_{\sigma, x}(z^-)$ lies in $U$ (see Figure \ref{notprimstabpic2}).  This completes the proof of the claim.

\begin{figure}[ht!]
\begin{center}
 \includegraphics[width=60mm]{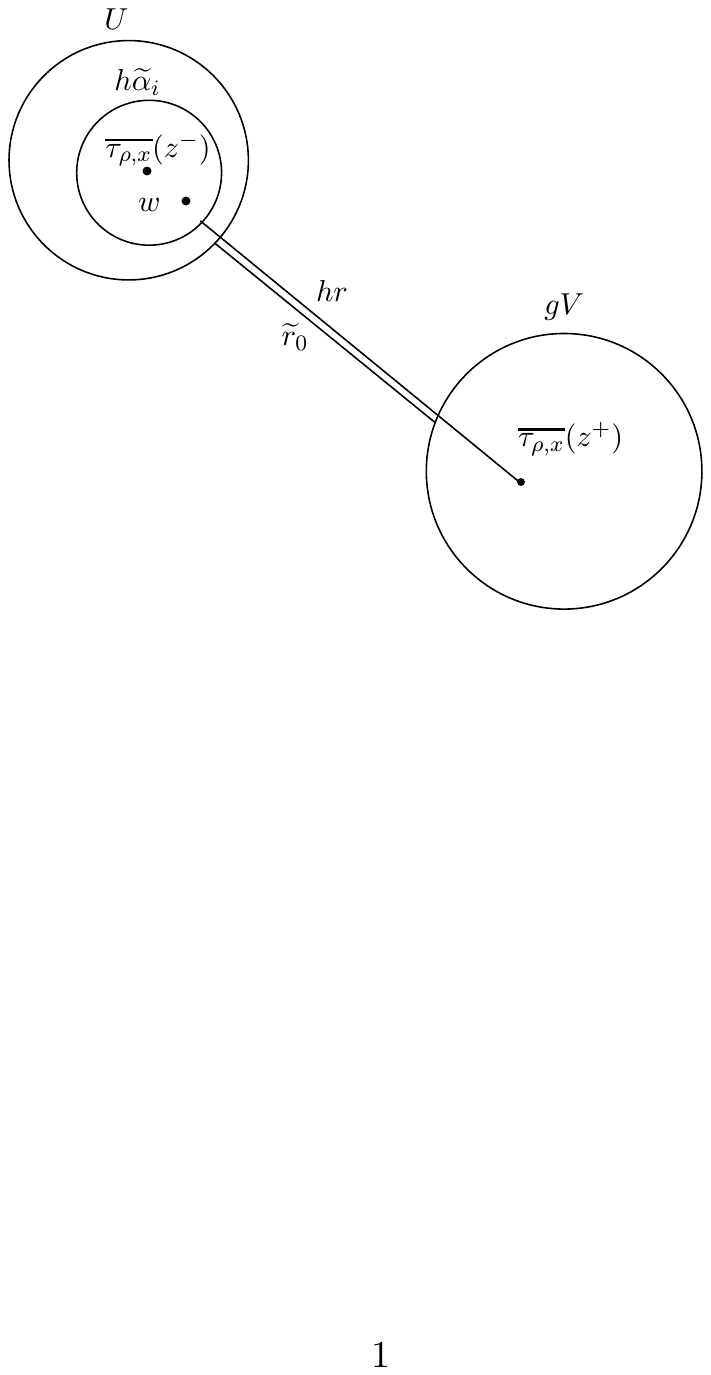}
\caption[A leaf]{Since $\lambda$ is in tight position with respect to $\alpha$ there is a leaf with one endpoint in $U$ and the other in $gV$.}
\label{notprimstabpic2}
\end{center}
\end{figure}

Secondly, we observe that $\Gamma_\alpha(\mu_\lambda)$ is a finite graph.  In case (a) this is obvious as $\lambda$ is a closed curve.  For case (b) recall the topological interpretation of the Whitehead graph (see Section \ref{topinterpret}), where edges in the Whitehead graph correspond to homotopy classes of arcs of $\lambda$ with endpoints on $\partial D$, relative to those endpoints. Since there can only be finitely many homotopy classes of arcs with endpoints on $\partial D$ that can be realized disjointly on $\partial_{ext}M$, there can only be finitely many edges in $\Gamma_\alpha(\mu_\lambda)$.

Since $\Gamma_\alpha(\mu_\lambda)$ is a finite graph contained in $\Gamma_\alpha(\mu_\infty)$ for $i$ large enough, $\Gamma_\alpha(\mu_{g_i})$ contains $\Gamma_\alpha(\mu_\lambda)$ as a subgraph.  Notice that any vertex in $\Gamma_\alpha(\mu_{g_i})$ is also a vertex in $\Gamma_\alpha(\mu_\lambda)$.  We claim that this implies that $\Gamma_\alpha(\mu_{g_i})$ must be strongly connected and without a strong cutpoint by Proposition \ref{masurdomain}.  Indeed, since any vertex in $\Gamma_\alpha(\mu_{g_i})$ is part of a nontrivial cycle in $\Gamma_\alpha(\mu_\lambda)$, it is part of the same nontrivial cycle in $\Gamma_\alpha(\mu_{g_i})$.   If $\Gamma_\alpha(\mu_{g_i})$ had a strong cutpoint $v$, then the component of $\Gamma_\alpha(\mu_\lambda)$ containing $v$ would either be not strongly connected or also have a strong cutpoint.  When $M$ is not the connect sum of two trivial $I$-bundles over closed surfaces, this contradicts Proposition \ref{primnsc}.

For the case when $M$ is the connect sum of two trivial $I$-bundles over closed surfaces, we first claim that each edge of $\Gamma_\alpha(\mu_\lambda)$ ``intersects'' every essential annulus, $A$, contained in each trivial $I$-bundle in the following sense.  Let $M= (S \times I) \# (T \times I)$.  Suppose that $A$ is an annulus in $S \times I$ or $T \times I$.  Let $\partial A = c_1 \sqcup c_2$.  In defining the Whitehead graph, we fixed lifts $\widetilde S$ and $\widetilde T$ of $S$ and $T$.  If we take a lift $\widetilde c_1$ of $c_1$ and the lift $\widetilde c_2$ of $c_2$ with the same endpoints as $\widetilde c_1$, then $\widetilde c_1 \cup \widetilde c_2$ forms a loop in $\partial \H^3$.
 We will say that an edge $e = (U_i, gU_j)$ intersects $A$ if there exists lifts $\widetilde c_1$ and $\widetilde c_2$ in either $\widetilde S$ or $\widetilde T$ as above such that $U_i$ and $gU_j$ lie in different components of $\partial \H^3 - (\widetilde c_1 \cup \widetilde c_2)$.

If $\lambda$ is a Masur domain lamination, then it intersects every essential annulus.  Using the topological interpretation of the Whitehead graph (see Section \ref{topinterpret}), $\Gamma_\alpha(\mu_\lambda)$ ``intersects'' every essential annulus in the sense above.

Since $\Gamma_\alpha(\mu_\lambda)$ is a finite graph contained in $\Gamma_\alpha(\mu_\infty)$ for $i$ large enough, $\Gamma_\alpha(\mu_{g_i})$ ``intersects'' any essential annulus contained in one of the two trivial $I$-bundles.  This implies that the geodesic representative $\gamma_i$ of $g_i$ intersects any essential annulus contained in one of the two factors, a contradiction.
\end{proof}

The assumption in Proposition \ref{pspoints} that each end corresponding to a component of the interior boundary is incompressible is necessary as the following proposition shows.  Recall that $\pi_1(M) = \pi_1(S_1) * \ldots * \pi_1(S_k) * F_j$, where $S_i$ is a closed surface and $F_j$ is the free group on $j$ elements.
\begin{proposition}
If $[\rho]$ lies in $\partial AH(M)$ such that $\rho|_{\pi_1(S_i)}$ is not convex cocompact, then $[\rho]$ does not lie in $\mathcal{SS}(M)$.  Moreover, if $\rho|_{\pi_1(S_i)}$ is not convex cocompact, then $[\rho]$ cannot lie in a domain of discontinuity
\end{proposition}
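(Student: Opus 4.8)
The plan is to reduce to the case of a trivial $I$-bundle, which was handled by Canary--Storm, by exploiting that $\pi_1(S_i)$ is a free factor of $\pi_1(M)$. Two structural remarks set this up. First, every element of $\pi_1(S_i)$ is separable: a loop in the summand $S_i\times I$ can be pushed off the essential disc along which $S_i\times I$ is glued to the remainder of $M$, and in the exceptional case where $M$ is the connect sum of two trivial $I$-bundles it can be pushed off an essential annulus in the other summand. Second, extending automorphisms of $\pi_1(S_i)$ by the identity on the complementary free factors gives an embedding $\Mod(S_i)\cong\out(\pi_1(S_i))\hookrightarrow\out(\pi_1(M))$, and (using that $\Hom(\pi_1(M),\pslc)$ is the product of the $\Hom$-spaces of the free factors) the restriction map $R\colon\X(M)\to\X(S_i):=\Hom(\pi_1(S_i),\pslc)/\!\!/\pslc$, $[\rho]\mapsto[\rho|_{\pi_1(S_i)}]$, is continuous and intertwines this $\Mod(S_i)$-action with the usual $\Mod(S_i)$-action on $\X(S_i)$.

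For the ``moreover'' assertion, suppose $[\rho]$ lay in an open set $U$ on which $\out(\pi_1(M))$ acts properly discontinuously, so the $\Mod(S_i)$-orbit of $[\rho]$ is discrete in $U$. Since $\rho|_{\pi_1(S_i)}$ is not convex cocompact, $R([\rho])$ lies in $\partial AH(S_i)$. By the Canary--Storm analysis of the trivial $I$-bundle $S_i\times I$ (\cite{can-sto}; see also \cite{can-mag}, \cite{lee1}), no point of $\partial AH(S_i)$ is a wandering point for $\Mod(S_i)$ on $\X(S_i)$: there are infinitely many distinct $\psi_m\in\Mod(S_i)$ with $\psi_m\cdot R([\rho])\to R([\rho])$. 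I would then transfer this to $\X(M)$: the representation $\rho\circ\widehat{\psi_m}^{-1}$ agrees with $\rho$ on every free factor except $\pi_1(S_i)$, where it is $\rho|_{\pi_1(S_i)}\circ\psi_m^{-1}$; realizing the convergence $\psi_m\cdot R([\rho])\to R([\rho])$ by conjugations that stay bounded --- which is possible because $\rho|_{\pi_1(S_i)}$, being discrete, faithful and non-elementary, is a stable point of the $\pslc$-action --- and then checking that the limiting conjugation must act trivially on the complementary factors (whose images are non-elementary or generate the rest of a discrete group), one concludes that $\widehat{\psi_m}\cdot[\rho]\to[\rho]$ in $\X(M)$. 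As the $\widehat{\psi_m}$ are distinct, this contradicts proper discontinuity on $U$. Hence $[\rho]$ lies in no domain of discontinuity.

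The first assertion then follows at once from the second together with Proposition~\ref{compdisc}, since $\mathcal{SS}(M)$ is a domain of discontinuity. (One can also see it directly: if $\rho$ were $(K,A)$-separable-stable, the length-comparison lemma preceding Proposition~\ref{compdisc} would give a constant $r>0$ with $l_\rho(g)\ge r\|g\|$ for every separable $g$, in particular for every $g\in\pi_1(S_i)$; but a discrete faithful representation of a closed hyperbolic surface group that is not convex cocompact is not a quasi-isometric embedding on orbits, so $\inf_g l_\rho(g)/\|g\|=0$ over $g\in\pi_1(S_i)$ --- realized either by an accidental parabolic or by simple closed curves exiting a geometrically infinite end --- contradicting that inequality.)

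The step I expect to be the main obstacle is the transfer in the ``moreover'' clause: one has to guarantee that the accumulation occurring in the GIT quotient $\X(S_i)$ is witnessed by conjugations of the representations that do not run off to infinity, so that the way the $\pi_1(S_i)$-part is amalgamated with the rest of $\pi_1(M)$ is not lost in the limit. This is precisely the bookkeeping already carried out for the incompressible-boundary case in \cite{can-sto}, \cite{can-mag}, \cite{lee1}, and it rests on the stability of discrete faithful non-elementary representations under the $\pslc$-action.
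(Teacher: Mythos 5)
Your handling of the first assertion is essentially the paper's own argument and is fine: every element of $\pi_1(S_i)$ is separable, and a discrete faithful surface subgroup that is not convex cocompact either contains a parabolic or has a sequence of (separable) closed geodesics of bounded length and unbounded word length exiting a degenerate end, so the length-comparison inequality, equivalently Lemma \ref{compact}, fails.

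The ``moreover'' clause is where the proposal breaks down, in two places. First, the input you attribute to Canary--Storm --- that \emph{every} point of $\partial AH(S_i\times I)$ is recurrent for $\Mod(S_i)$ on $\X(S_i\times I)$, i.e.\ admits infinitely many distinct $\psi_m$ with $\psi_m\cdot R([\rho])\to R([\rho])$ --- is not what those papers prove. What they establish is that no open \emph{invariant} set containing the interior of $AH(S\times I)$ together with boundary points is a domain of discontinuity; the mechanism is that certain boundary points are limits of representations with \emph{infinite stabilizer}, which is weaker than recurrence of the boundary point itself and is the only thing available. Second, even granting recurrence downstairs, the transfer to $\X(M)$ fails: convergence $\psi_m\cdot R([\rho])\to R([\rho])$ in the GIT quotient is witnessed by elements $A_m$ with $A_m\bigl(\rho|_{\pi_1(S_i)}\circ\psi_m^{-1}\bigr)A_m^{-1}\to\rho|_{\pi_1(S_i)}$ in $\Hom$, but since $\rho|_{\pi_1(S_i)}\circ\psi_m^{-1}$ wanders off in $\Hom$, the $A_m$ need not stay bounded, and after conjugating, the complementary factors become $A_m\rho|_{G_j}A_m^{-1}$, which need not return to $\rho|_{G_j}$. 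So $\widehat{\psi_m}\cdot[\rho]$ need not accumulate at $[\rho]$; stability gives closed orbits, not compatibility of the conjugations across free factors. The paper avoids both issues by working upstairs with the free splitting directly: if some separable $g$ has $\rho(g)$ parabolic, approximate $\rho$ in $\Hom$ by $\rho_i$ with $\rho_i(g)$ elliptic of order $n_i$; the partial conjugation that is conjugation by $g^{n_i}$ on the free factor containing $g$ and the identity on the complementary factor then fixes $\rho_i$ \emph{exactly}, so each $[\rho_i]$ has infinite stabilizer and no open set containing their limit can be a domain of discontinuity. If no separable element is parabolic but $\rho|_{\pi_1(S_i)}$ is geometrically infinite, the paper produces $\rho_k\to\rho$ with a separable curve pinched (compactness of the Bers slice closure, continuity of the length function, and the ending lamination theorem) and applies the parabolic case to each $\rho_k$. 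If you want to keep your reduction, you must transfer the infinite-stabilizer approximates (which does work, since the elliptic approximation can be done at the level of $\Hom$ without disturbing the other factors) rather than a recurrence statement for $R([\rho])$ --- at which point you have reproduced the paper's argument.
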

\begin{proof}
If $\rho|_{\pi_1(S_i)}$ has a cusp, then there is a separable element that maps to a parabolic element.  So $\rho$ cannot be separable-stable.  If $\rho|_{\pi_1(S_i)}$ is geometrically infinite, then in $N_\rho$ there is a sequence of separable geodesics exiting every compact set, so $\rho$ cannot be separable-stable by Lemma \ref{compact}.

To see the second statement, suppose that $\rho(g)$ is parabolic for a separable element $g$.  Then, there exists a sequence of representations $\rho_i$ in $\mathcal{X}(M)$ such that $\rho_i(c)$ is elliptic of finite order, $n_i$ (see \cite{lee1} Lemma 15).  Since $g$ is separable, there exists a nontrivial splitting, $G=G_1 * G_2$ such that $g$ lies in $G_1$.  The automorphism $f_{n_i}$ that restricts to conjugation by $g^{n_i}$ on $G_1$ and restricts to the identity on $G_2$ fixes $\rho_{i}$.  In particular, each $\rho_i$ has an infinite stabilizer, so, the limit $[\rho]$ cannot lie in any domain of discontinuity.

Now suppose that $\rho(g)$ is not parabolic for any separable element $g$ but $\rho|_{\pi_1(S_i)}$ is geometrically infinite for some $i$.  Then, we will describe a sequence of representations $\rho_k$ approaching $\rho$ such that there exists a separable curve $g_k$ with $\rho_k(g_k)$ parabolic.  Since each $\rho_k$ cannot lie in a domain of discontinuity, neither can $\rho$.  To find such a sequence first observe that $\rho|_{\pi_1(S_i)}$ is purely hyperbolic.  By the covering theorem (\cite{can1}), it can have only one geometrically infinite end and so it lies in the closure of a Bers slice, $\overline{B}.$  Let $\lambda$ be its ending lamination, and let $\gamma_j$ be a sequence of simple closed curves on $S_i$ approaching $\lambda$.  Define a sequence of representations $\rho_k$ satisfying the following:
 \begin{itemize}
 \item $\rho_k|_{\pi_1(S_i)}$ lies in $\overline{B}$,
 \item $\rho_k(\gamma_j)$ is parabolic and
 \item $\rho_k|_{\pi_1(S_1) * \ldots *\widehat{\pi_1(S_i)} * \ldots* \pi_1(S_k) * F_j}=\rho|_{\pi_1(S_1) * \ldots *\widehat{\pi_1(S_i)} * \ldots* \pi_1(S_k) * F_j}.$
 \end{itemize}
  Then, as $\overline{B}$ is compact, up to subsequence $\rho_k|_{\pi_1(S_i)}$ converges to some $\rho'$ in $\overline{B}$.  As the length function is continuous on $AH(S_i \times I)$ (\cite{bro}), the length of $\lambda$ in $\rho'$ must be zero. In particular, $\lambda$ must be an ending lamination for $\rho'$.  By the ending lamination theorem (\cite{bro-can-min1}), possibly after conjugating, $\rho_k|_{\pi_1(S_i)}$ must converge to $\rho|_{\pi_1(S_i)}$.  On the other factors of $\pi_1(M),$ by construction $\rho_k$ converges to $\rho$.  Hence, $\rho_k$ converges to $\rho$ where $\rho_k$ has a separable curve pinched.  Since $[\rho_k]$ cannot lie in a domain of discontinuity, neither can $[\rho].$
\end{proof}

\section{Other homeomorphism types in $AH(M)$}\label{nonhomeo}

So far we have found separable-stable points on $\partial AH(M)$ that have the same homeomorphism type as $M$. In this section, we show that if $M$ is a large compression body, then there exists $M'$ homotopy equivalent but not homeomorphic to $M$ such that the for each component $C$ of the interior of $AH(M)$ corresponding to $M'$,  no point on $\partial C$ is separable-stable, even though every point in $C$ is separable-stable.

\begin{proposition} \label{otherhomeo}
Suppose that $M'$ is homotopy equivalent to $M$ such that
\begin{enumalph}
\item $M'$ is not homeomorphic to $M$
\item for each compressible component $B$ of $\partial M',$ the subgroup $i_*(\pi_1(B))$ is a free factor of $\pi_1(M').$
\end{enumalph}
If $C$ is a component of the interior of $AH(M)$ corresponding to $M'$, then $\overline{C}- C$ has no separable stable points.
\end{proposition}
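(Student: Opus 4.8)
The plan is to show, via Lemma \ref{compact}, that every $[\rho]\in\overline C-C$ fails to be separable-stable: some separable element is sent to a parabolic, or separable geodesics escape every compact set.

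First I would pin down the topology of $N_\rho$. A point $[\rho]\in\overline C-C$ lies in $\partial AH(M)$, so by Sullivan's theorem (\cite{sul}) that $\inte AH(M)$ consists of convex cocompact representations, $\rho$ is discrete, faithful and not convex cocompact. Since $C$ is the component of $\inte AH(M)$ corresponding to the homeomorphism type $M'$, a degeneration of a sequence in $C$ to $[\rho]$ introduces only parabolics and degenerate ends and does not alter the homeomorphism type of the (relative) compact core; hence $N_\rho$ admits a relative compact core homeomorphic to $M'$, and the ends and cusps of $N_\rho$ correspond to the boundary components of $M'$. Because $M$, and therefore $M'$, has no toroidal boundary, $\pi_1(M')$ contains no $\mathbb{Z}^2$, so every cusp of $N_\rho$ has rank one and its core curve is freely homotopic to a simple closed curve on some boundary component of $M'$.

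The heart of the argument is a dichotomy between the boundary components of $M'$ and of $M$. I claim that for every boundary component $F$ of $M'$, the subgroup $i_*(\pi_1(F))\le\pi_1(M')\cong\pi_1(M)$ is a \emph{proper} free factor. If $F$ is incompressible, then by the structure of homotopy equivalences of compression bodies (\cite{can-mcc}, \cite{mcc-mil}) $i_*(\pi_1(F))$ is conjugate to one of the closed-surface free factors of $\pi_1(M)$, which is proper since $M$ is not a trivial $I$-bundle. If $F$ is compressible, then $i_*(\pi_1(F))$ is a free factor by hypothesis (b); it must be proper, for otherwise $M'$ would itself be a compression body (with exterior boundary $F$) homotopy equivalent to $M$, hence—since homotopy equivalent compression bodies are homeomorphic—homeomorphic to $M$, contradicting (a). As $M$ is large it is not the connect sum of two trivial $I$-bundles, so by Lemma \ref{mer} (and the characterization of separable elements in Section \ref{ssreps}) any element of $\pi_1(M)$ lying in a proper free factor of a free decomposition is separable. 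Thus every element of $\pi_1(M)$ freely homotopic to a curve on $\partial M'$ is separable. This is precisely the reverse of the situation in Proposition \ref{pspoints}: there the degenerating curves live on $\partial_{ext}M$, whose fundamental group surjects $\pi_1(M)$ and is not a proper free factor, so the relevant curves are Masur domain curves and are \emph{not} separable—which is what allowed those boundary points to be separable-stable. Hypothesis (a) is exactly what rules this out for $M'$.

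Finally I would run the two degeneration cases. If $\rho$ has an accidental parabolic $\rho(\gamma)$, then $\gamma$ is freely homotopic to the core of the corresponding rank-one cusp annulus, hence to a curve on $\partial M'$, so $\gamma$ is separable and $\rho$ violates condition (a) of Lemma \ref{compact}. Otherwise $N_\rho$ has a geometrically infinite end $E$ corresponding to a boundary component $F$ of $M'$; by the definition of such an end there is a sequence of closed geodesics $\alpha_k$ in $N_\rho$, each freely homotopic to a simple closed curve on $F$ and leaving every compact set. By the previous paragraph each $\alpha_k$ is separable, so the separable geodesics of $N_\rho$ are not contained in any compact set and $\rho$ violates condition (b) of Lemma \ref{compact}. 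In either case $[\rho]\notin\mathcal{SS}(M)$, proving the proposition. The main obstacle is the deformation-theoretic input of the first paragraph—that the relative compact core of $N_\rho$ is homeomorphic to $M'$, with ends and cusps matching the boundary components of $M'$, so that no further compression occurs in the limit—together with the homotopy-rigidity fact that a compressible boundary component of $M'$ cannot carry all of $\pi_1(M')$; once these are in hand, the bookkeeping with separable elements is routine.
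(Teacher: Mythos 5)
There is a genuine gap in your first paragraph, and it is precisely the point the paper's proof is built to avoid. You assert that for an arbitrary $[\rho]\in\overline{C}-C$ the degeneration ``does not alter the homeomorphism type of the relative compact core,'' so that $N_\rho$ has relative compact core homeomorphic to $M'$ with ends and cusps matching $\partial M'$. This is not justified, and it is not true in general for algebraic limits: when parabolics or degenerate ends appear, algebraic convergence need not be geometric, and the (marked) homeomorphism type of the algebraic limit of a sequence in a fixed component of $\inte AH(M)$ can change (this is the wrapping/bumping phenomenon behind the Anderson--Canary examples; it is also why $\partial AH(M)$ can be shared by several components). Both of your cases lean on this claim: in the parabolic case you need the cusp's core curve to be homotopic into a boundary component of $M'$ (rather than, say, into $\partial_{ext}M$ of some other core, where it could be a non-separable Masur domain curve as in Proposition \ref{pspoints}), and in the geometrically infinite case you need the degenerate end to face a boundary component of $M'$. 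Without control of the compact core, neither conclusion follows.

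The paper sidesteps this by arguing only for \emph{purely hyperbolic} points of $\overline{C}-C$: for such $[\rho]$, algebraic convergence of $\rho_i\to\rho$ implies geometric convergence (\cite{bro-bro-can-min}), and then $N_\rho$ is homeomorphic to $N_{\rho_i}\cong\inte(M')$ by \cite{can-min}; a geometrically infinite end facing some $B\subset\partial M'$ then yields separable geodesics exiting every compact set, contradicting Lemma \ref{compact}. The conclusion for all of $\overline{C}-C$ then follows from the density of purely hyperbolic points in $\overline{C}-C$ (\cite{can-her}, \cite{nam-sou}, \cite{ohs}) combined with the openness of $\mathcal{SS}(M)$ from Proposition \ref{compdisc}. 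Your treatment of the properness of the free factors $i_*(\pi_1(B))$ (using hypothesis (a) to rule out $i_*(\pi_1(B))=\pi_1(M')$ for a compressible $B$) is a correct and welcome elaboration of a step the paper leaves implicit, but the proof as a whole needs either the density-plus-openness reduction or a genuine argument that the compact core does not change in the limit; as written it has neither.
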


\begin{proof}
Let $[\rho]$ be a purely hyperbolic point in $\overline{C}- C$.  Then $\rho$ is the algebraic limit of $\rho_i$ in $C$ such that $N_{\rho_i}$ is homeomorphic to the interior of $M'$.  Since $\rho$ has no parabolics, $\rho_i$ converges to $\rho$ geometrically (see \cite{bro-bro-can-min}).  Then, $N_\rho$ is homeomorphic to $N_{\rho_i}$ (\cite{can-min}).  Since each boundary component of $M'$ maps to a proper factor of a free decomposition of $\pi_1(M)$, there is a boundary component $B$ of $M'$ such that the end corresponding to $B$ is geometrically infinite.  Then, there exists a sequence of simple closed curves on $B$ whose geodesic representatives leave every compact set of $N_\rho$.  By Lemma \ref{compact}, $\rho$ cannot be separable-stable, as any simple closed curve on $B$ is separable.  Since purely hyperbolic points are dense in $\overline{C}-C$ (\cite{can-her}, Lemma 4.2, \cite{nam-sou}, \cite{ohs}), this completes the proof.
\end{proof}

If $M$ is a large compression body, then there always exists such an $M'$.  Any $M'$ homotopy equivalent to $M$ with more than one compressible boundary component will suffice.  Suppose that $B$ and $B'$ are two compressible boundary components of $M'.$   Let $m$ be a meridian in $B'$ bounding a disc $D$.  If $D$ separates $M'$ into $M'_1$ and $M'_2$, then $\pi_1(M) \cong \pi_1(M'_1) * \pi_1(M'_2)$ and $\pi_1(B)$ lies in one of the two factors.  If $D$ is non-separating and $M''=M'-D$, then $\pi_1(M) \cong \pi_1(M'')*_{\{1\}} \cong \pi_1(M'')*\mathbb{Z}$ and $\pi_1(B)$ lies in the first factor.

\end{document}